\let\lable\label
\def\label#1{\marginpar{#1}\lable{#1}}
\theoremstyle{plain}
\newtheorem{theorem}{Theorem}[section]
\newtheorem{lemma}[theorem]{Lemma}
\newtheorem{proposition}[theorem]{Proposition}
\newtheorem{corollary}[theorem]{Corollary}
\theoremstyle{definition}
\newtheorem{definition}{Definition}[section]
\theoremstyle{remark}
\newtheorem{remark}{Remark}
\newbox\allthequestions
\newenvironment{question}%
{\setbox0=\vbox\bgroup\innerquestion}
{\endinnerquestion\egroup
 \global\setbox\allthequestions=\vbox{\unvbox\allthequestions
                                      \bigskip
                                      \unvcopy0}
 \unvbox0}
\newcommand\printthequestions{\section{Questions}\par
In this section we list all the questions with their original
numbering.\par
\unvbox\allthequestions}
\numberwithin{equation}{section}
\DeclareMathOperator{\fix}{fix}
\DeclareMathOperator{\fin}{fin}
\DeclareMathOperator{\cf}{cf}
\DeclareMathOperator{\dom}{dom}
\newcommand{\tie}[1]
{\,\raise-5pt\hbox{
${\buildrel{\displaystyle{\rhd}\!\!{\lhd}}\over{
\scriptstyle
#1}}$}\,}
\newcommand{\forces}[2]{\Vdash_{#1} \mbox{``} #2 \mbox{''}}
\newcommand{\Naturals}{{\mathbb N}}
\newcommand{\betan}{\beta{\mathbb N} \setminus {\mathbb N}}
\newcommand{\Nstar}{\Naturals^*}
\title{Tie-points and fixed-points in $\Naturals^*$}
\author[A. Dow]{Alan Dow}
\address{}
\author[S. Shelah]{Saharon Shelah}
\address{Department of Mathematics, Rutgers University, Hill Center,
 Piscataway, 
 New Jersey, U.S.A. 08854-8019}
\curraddr{Institute of Mathematics\\Hebrew University\\
Givat Ram, Jerusalem 91904, Israel}
\email{shelah@math.rutgers.edu}
\date{\today}
\thanks{
Research of the first author was supported by NSF grant No. NSF-.
The research of the second  author was supported by The Israel Science
Foundation founded by the Israel Academy of Sciences and Humanities, and
by NSF grant No. NSF- . This is paper number 916
 in the second  author's personal listing}
\keywords{automorphism, Stone-Cech, fixed points}
\subjclass{03A35}
\begin{document}
\maketitle
\begin{abstract}
A point $x$ is a (bow) tie-point of a space $X$ if $X\setminus \{x\}$
can be partitioned into (relatively) clopen sets each with $x$ in its
closure.  Tie-points have appeared in the construction of non-trivial
autohomeomorphisms of $\betan$ (e.g. \cite{veli.oca,ShSt735}) and in the
recent study of (precisely) 2-to-1 maps on $\betan$. In these cases
the tie-points have been the unique fixed point of an involution on
$\betan$. This paper is motivated by the search for 2-to-1 maps and
obtaining tie-points of strikingly differing characteristics. 
\end{abstract}
\bibliographystyle{plain}

\section{Introduction}

A point $x$ is a tie-point of a space $X$ if there are closed sets
$A,B$ of $X$ such that $\{x\}=A\cap B$ and $x$ is an adherent point of
each of $A$ and $B$. 
 We picture (and denote) this as  $X = A\tie{x} B$ where $A,
B$ are the closed sets which have a unique common accumulation point
$x$ and
say that $x$ is a tie-point as witnessed by $A,B$.  Let $A \equiv_x B$
mean that there is a homeomorphism from 
$A$ to $B$ with $x$ as a fixed point. If $X = A\tie{x} B$ and
$A\equiv_x B$, then there is an involution $F$ of $X$ (i.e. $F^2 = F$)
such that $\{x\} = \fix(F)$. In this case we will say that $x$ is a
symmetric tie-point of $X$. 

An autohomeomorphism $F$ of $\betan$ (or $\Nstar$)
is said to be {\em trivial\/} if
there is a bijection $f$ between cofinite subsets of $\Naturals$ such
that $F = \beta f \restriction \betan$.  If $F$ is a trivial
autohomeomorphism, then $\fix(F)$ is clopen; so of course $\betan$
will have no symmetric tie-points in this case if all
autohomeomorphisms are trivial.

If $A$ and $B$ are arbitrary compact spaces, and if $x\in A$ and $y\in
B$ are accumulation points, then let $A\tie{x{=}y} B$ denote the
quotient space of $A\oplus B$ obtained by identifying $x$ and
$y$ and let $xy$ denote the collapsed point.
 Clearly the point $xy$ is a tie-point of this space. 

We came to the study of tie-points via the following observation.

\begin{proposition} If $x,y$ are symmetric tie-points of $\betan$ as
  witnessed by $A,B$ and $A',B'$ respectively, then there is a 2-to-1
  mapping from $\betan$ onto the space $A\tie{x{=}y} B'$.
\end{proposition}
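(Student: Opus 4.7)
The goal is to exhibit a two-to-one continuous surjection $\pi \colon \betan \to Y$ where $Y := A \tie{x=y} B'$. The hypothesis gives two involutions $F, G$ of $\betan$ with $\fix(F) = \{x\}$, $F(A) = B$, $\fix(G) = \{y\}$, and $G(A') = B'$. Each involution yields a continuous ``fold'' surjection: $q_F \colon \betan \to A$ defined by $q_F(p) = p$ for $p \in A$ and $q_F(p) = F(p)$ for $p \in B$, and $q_G \colon \betan \to B'$ defined analogously. Each of $q_F, q_G$ has all fibers of size two except for the single singleton fiber above its fixed point.

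The plan is to splice $q_F$ and $q_G$ along a clopen partition $\betan = V \sqcup U$ with $x \in V$ and $y \in U$, setting $\pi(p) = q_F(p) \in A \subseteq Y$ for $p \in V$ and $\pi(p) = q_G(p) \in B' \subseteq Y$ for $p \in U$. Continuity is then automatic on the clopen decomposition. The two singleton-fiber defects of $q_F$ and $q_G$ combine to form the required two-element fiber above the tie-point $xy \in Y$: indeed $q_F^{-1}(x) = \{x\} \subseteq V$ and $q_G^{-1}(y) = \{y\} \subseteq U$, so $\pi^{-1}(xy) = \{x, y\}$; no other point of $V$ can map to $xy$ via $q_F$ (whose image lies in $A$), and no other point of $U$ maps to $xy$ via $q_G$.

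For $\pi$ to be onto both halves of $Y$, the clopen set $U$ must be chosen so that $q_G|_U$ surjects onto $B'$ and $q_F|_V$ surjects onto $A$. Equivalently, $U \cup G(U) = \betan$ and $V \cup F(V) = \betan$: every $G$-orbit meets $U$ and every $F$-orbit meets $V$. These are the two ``covering'' conditions that pin down the fiber structure of $\pi$ away from the tie-point, and together with the clopen requirement and the required location of $x, y$ they determine the behavior of $\pi$.

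The main obstacle is constructing a clopen $U$ satisfying all four constraints: $y \in U$, $x \notin U$, $U \cup G(U) = \betan$, and $(\betan \setminus U) \cup F(\betan \setminus U) = \betan$. I would approach this by beginning with a clopen neighborhood of $y$ disjoint from $x$ and enlarging it iteratively to meet every $G$-orbit, while simultaneously ensuring that its complement continues to meet every $F$-orbit. The compatibility of the $F$- and $G$-covering conditions rests on the fact that each involution has only one fixed point, so the constraints do not interact pathologically on any single orbit; the rich clopen algebra of $\betan$ then allows the inductive enlargement to be carried out and to stabilize at a clopen set with the required properties. Once such a $U$ is in hand, the verification of the claimed fiber structure of $\pi$ is immediate from the construction.
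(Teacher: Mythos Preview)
Your splicing construction has a genuine gap: the four constraints you isolate guarantee that $\pi$ is surjective, but not that every fiber has size exactly two. Take any $a\in A\setminus\{x\}$, viewed as a point of $Y=A\tie{x{=}y}B'$. Since $A$ and $B'$ sit disjointly in $Y$ away from $xy$, no point of $U$ can map to $a$ (because $q_G$ lands in $B'$), so
\[
\pi^{-1}(a)=q_F^{-1}(a)\cap V=\{a,F(a)\}\cap V.
\]
For this to have two elements you need \emph{both} $a$ and $F(a)$ in $V$. Demanding this for every $a\in A\setminus\{x\}$ forces
\[
V\supseteq (A\setminus\{x\})\cup F(A\setminus\{x\})=\Nstar\setminus\{x\},
\]
hence $U\subseteq\{x\}$, which contradicts $y\in U$ as soon as $x\neq y$. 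Your covering condition $V\cup F(V)=\Nstar$ only says each $F$-orbit meets $V$ \emph{at least} once; when it meets $V$ exactly once the corresponding fiber of $\pi$ has size one, and this is unavoidable on any nontrivial clopen split.

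The paper's (implicit) argument, indicated in the paragraph following the proposition, sidesteps this by using \emph{two} copies of $\Nstar$ rather than splitting one. Map the first copy onto $A$ by $q_F$ and the second onto $B'$ by $q_G$, then compose with the quotient $A\oplus B'\to A\tie{x{=}y}B'$. Now every fiber over $A\setminus\{x\}$ or $B'\setminus\{y\}$ is a full two-point orbit, while the fiber over $xy$ is $\{x\}\sqcup\{y\}$, again of size two. Since $\Nstar\cong\Nstar\oplus\Nstar$ (split $\Naturals$ into two infinite pieces), this gives the required $2$-to-$1$ map from $\Nstar$.
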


The proposition holds more generally if $x $ and $y$ are fixed points
of involutions $F,F'$ respectively. That is, replace $A$ by the
quotient space of $\betan$ obtained by collapsing  all sets
$\{z,F(z)\}$
 to single 
points and similary replace $B'$ by the quotient space induced by
$F'$. 
 It is an open problem to determine
if 2-to-1 continuous images of $\betan$ are homeomorphic to
$\betan$ \cite{RLevy}. It is known to be true if CH \cite{DTech1}
or PFA \cite{Dtwo} holds. 

There are many interesting questions that arise naturally when
considering the concept of tie-points in $\betan$. 
Given a closed set
$A\subset \betan$, let 
$\mathcal I_{A} = \{ a\subset \Naturals : a^* \subset A\}~.$
Given an ideal $\mathcal I$ of subsets of $\Naturals$, let $\mathcal
I^{\perp} = \{ b\subset \Naturals : (\forall a\in \mathcal I)~~
a\cap b=^* \emptyset \}$ and $\mathcal I^{+} = \{ d\subset \Naturals :
(\forall a\in I)~~
 d\setminus a \notin \mathcal I^{\perp} \}$. If $\mathcal J\subset
 [\Naturals]^\omega $, let $\mathcal J^{\downarrow} = \bigcup_{J\in
   \mathcal J} \mathcal P(J)$. Say that $\mathcal J\subset \mathcal I$
 is unbounded  in  $\mathcal I$ if for each $a\in \mathcal
 I$, there is 
 a $b\in \mathcal J$ such that $b\setminus a$ is infinite.

\begin{definition} If $\mathcal I$ is an ideal of subsets of
  $\Naturals$, set $\cf(\mathcal I)$ to be the cofinality of $\mathcal
  I$; $\mathfrak b(\mathcal I)$ is the minimum cardinality of an
  unbounded family in $\mathcal I$; $\delta(\mathcal I)$ is the
  minimum cardinality of a subset $\mathcal J$ of $\mathcal I$ such
  that $\mathcal J^\downarrow$ is dense in $\mathcal I$. 
\end{definition}

If $\betan = A\tie{x} B$, then $\mathcal I_B = \mathcal I_A^\perp$ and
$x$ is the unique ultrafilter on $\Naturals$ extending $\mathcal I^+_A
\cap \mathcal I_B^+$. The character of $x$ in $\betan$ is equal to the
maximum of $\cf(\mathcal I_A)$ and $\cf(\mathcal I_B)$. 

\begin{definition} Say that a tie-point $x$  has (i)
$\mathfrak b$-type;
   (ii) $\delta$-type; respectively (iii)
 $\mathfrak b\delta$-type, 
  $(\kappa,\lambda) $ if $\betan = A\tie{x} B$ and $(\kappa,\lambda)$
  equals: 
(i) $(\mathfrak b(\mathcal I_A),\mathfrak
b(\mathcal I_B))$  (ii) $(\delta(\mathcal I_A),\delta(\mathcal I_B))$;
and (iii) each of
 $(\mathfrak b(\mathcal I_A),\mathfrak b(\mathcal I_B))$ and
$(\delta(\mathcal I_A),\delta(\mathcal I_B)) $. We will adopt the
convention to put  the smaller of the pair
$(\kappa,\lambda)$ in the first coordinate.
\end{definition}

Again, it is interesting to note that if $x$ is a tie-point of
$\mathfrak b$-type $(\kappa, \lambda)$, then it
is uniquely determined (in $\betan$) by $\lambda$ many subsets of
$\Naturals$ since $x$ will be the unique point extending the family
$((\mathcal J_A)^\downarrow )^+ \cap ((\mathcal J_B)^\downarrow)^+$
where $\mathcal J_A$ and $\mathcal J_B$ are unbounded subfamilies of
$\mathcal I_A$ and $\mathcal I_B$. 

\begin{question} Can there be a tie-point in $\betan$  with $\delta$-type
  $(\kappa,\lambda)$ with $\kappa\leq \lambda$ less than the character 
of the  point? 
\end{question}

\begin{question} Can $\betan$ have tie-points of $\delta$-type 
$(\omega_1,\omega_1)$ and $(\omega_2,\omega_2)$?
\end{question}

\begin{proposition} If $\betan$ has symmetric tie-points of
  $\delta$-type $(\kappa,\kappa)$ and $(\lambda,\lambda)$, but no
  tie-points of $\delta$-type $(\kappa,\lambda)$, then $\betan$ has a
  2-to-1 image which is not homeomorphic to $\betan$. 
\end{proposition}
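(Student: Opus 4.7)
The plan is as follows. Let $\betan = A\tie{x} B$ and $\betan = A'\tie{y}B'$ witness the symmetric tie-points $x$ and $y$, with $\delta(\mathcal I_A) = \delta(\mathcal I_B) = \kappa$ and $\delta(\mathcal I_{A'}) = \delta(\mathcal I_{B'}) = \lambda$. Proposition 1.1 then hands us a 2-to-1 continuous surjection from $\betan$ onto $Y := A\tie{x{=}y}B'$. I would suppose for contradiction that $Y$ is homeomorphic to $\betan$ and use this to extract a tie-point of $\betan$ of $\delta$-type $(\kappa,\lambda)$, contradicting the hypothesis.

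The technical heart of the argument is the claim that $\delta(\mathcal I_C)$ is an invariant of the pointed compact space $(C,p)$ whenever $C$ witnesses a tie-point $p$ of $\betan$, i.e.\ $\betan = C\tie{p}D$. Specifically, I would show that the relatively clopen subsets of $C$ that omit $p$ are exactly the clopen subsets of $\betan$ contained in $C$. The key identity is $C\setminus\{p\} = \betan\setminus D$, which is open in $\betan$: given a relatively clopen $U\subset C$ with $p\notin U$, write $U = C\cap V$ with $V$ open in $\betan$; then $U = (\betan\setminus D)\cap V$ is open in $\betan$, and also closed in $\betan$ since $C$ is closed. Consequently $\mathcal I_C$ is order-isomorphic (via $c\mapsto c^*$) to the Boolean algebra of relatively clopen subsets of $C$ not containing $p$, an object which manifestly depends only on the pointed space $(C,p)$.

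Now suppose $h: Y\to \betan$ is a homeomorphism. Inside $Y$, the natural copies $\hat A$ and $\hat{B'}$ of $A$ and $B'$ witness $xy$ as a tie-point, and $(\hat A, xy)\cong (A, x)$, $(\hat{B'}, xy)\cong (B', y)$ as pointed spaces. Transporting via $h$, the closed sets $h(\hat A), h(\hat{B'})\subset \betan$ witness $h(xy)$ as a tie-point of $\betan$, and by the invariance established in the previous paragraph $\delta(\mathcal I_{h(\hat A)}) = \delta(\mathcal I_A) = \kappa$ while $\delta(\mathcal I_{h(\hat{B'})}) = \delta(\mathcal I_{B'}) = \lambda$. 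This produces a tie-point of $\delta$-type $(\kappa,\lambda)$ in $\betan$, contradicting the hypothesis.

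The main obstacle I anticipate is the invariance of $\delta(\mathcal I_C)$ under the pointed-homeomorphism class of $(C,p)$; everything else is bookkeeping around Proposition 1.1 and the definitions. Fortunately, that obstacle dissolves into the short open-set computation above, which is really just the observation that the two ``sides'' of a tie-point are complementary off of the tie-point itself.
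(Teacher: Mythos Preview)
Your argument is correct and is exactly the intended one: the paper states this proposition without proof, as an immediate corollary of Proposition~1.1 together with the observation that the $\delta$-type of a tie-point is determined by the pointed spaces $(A,x)$ and $(B,x)$. Your verification of that invariance---via the identity $C\setminus\{p\}=\betan\setminus D$, which shows that the relatively clopen subsets of $C$ missing $p$ coincide with the $c^*$ for $c\in\mathcal I_C$---is precisely the missing ingredient.
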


One could say that a tie-point $x$ was {\em radioactive\/} in $X$
 (i.e. 
\rlap{\raise5.5pt\hbox{$\bigtriangledown$}}
\hspace*{-8.2pt}\hbox{$\rhd\!\lhd$})
 if $X\setminus
\{x\}$ can be similarly split into 3 (or more) relatively 
clopen sets accumulating to $x$. This is equivalent to $X = A\tie{x}
B$ such that $x$ is a tie-point in either $A$ or $B$. 

Each point of character $\omega_1$ in $\betan$ is a radioactive
point (in particular is a tie-point). P-points of character $\omega_1$ 
are symmetric tie-points of $\mathfrak b\delta$-type
$(\omega_1,\omega_1)$, while points of character $\omega_1$ which are
not P-points will have $\mathfrak b$-type $(\omega,\omega_1)$ and
$\delta$-type $(\omega_1,\omega_1)$. If there is a tie-point of
$\mathfrak b$-type $(\kappa,\lambda)$, then of course there are
$(\kappa,\lambda )$-gaps. If there is a tie-point of $\delta$-type
$(\kappa,\lambda)$, then $\mathfrak p\leq \kappa$. 

\begin{proposition} If\label{mathp}
 $\betan = A \tie{x} B$, then $\mathfrak p\leq
\delta (\mathcal I_A)$. 
\end{proposition}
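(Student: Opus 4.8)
The plan is to establish the contrapositive-flavored bound directly: take any subfamily $\mathcal J \subset \mathcal I_A$ with $\card{\mathcal J} < \mathfrak p$ and show $\mathcal J^\downarrow$ is not dense in $\mathcal I_A$, so that $\delta(\mathcal I_A) \geq \mathfrak p$. Recall that $\betan = A\tie{x} B$ forces $\mathcal I_B = \mathcal I_A^\perp$, and $x$ is the unique ultrafilter extending $\mathcal I_A^+ \cap \mathcal I_B^+$. The key structural fact I would exploit is that every $a \in \mathcal I_A$ satisfies $a^* \subseteq A$, hence $x \notin a^*$, i.e. the complement $\Naturals \setminus a \in x$. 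So the family $\{\Naturals\setminus a : a \in \mathcal J\}$ is a subfamily of the ultrafilter $x$ of size $< \mathfrak p$, and therefore has a pseudo-intersection: there is an infinite $c \subseteq \Naturals$ with $c \subseteq^* \Naturals\setminus a$ for every $a\in\mathcal J$, equivalently $c \cap a =^* \emptyset$ for all $a \in \mathcal J$.

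Next I would arrange that this pseudo-intersection witness can be chosen inside $\mathcal I_A$ itself (otherwise it might live entirely in $B$ and be useless for showing non-density in $\mathcal I_A$). Here is where the tie-point structure is used more seriously. Since $A$ and $B$ have the single common point $x$ and are both closed with $x$ in each closure, for any infinite $c$ with $c^* \cap A \ne \emptyset$ we can shrink $c$ to an infinite $c' \subseteq c$ with $c' \in \mathcal I_A$ (pick a point of $c^*\cap A$ other than $x$, which exists since $x$ is not isolated in $A$... actually one argues: $c^* \cap (A\setminus\{x\})$ is a nonempty open subset of $A\setminus\{x\}$, hence contains a basic clopen set $(c')^*$ with $c'\subseteq c$, giving $c' \in \mathcal I_A$). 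So the only obstruction is the case $c^* \subseteq B$, i.e. $c \in \mathcal I_B = \mathcal I_A^\perp$. But if $c \in \mathcal I_A^\perp$ then $c$ has finite (or empty) intersection with every element of $\mathcal I_A$; and by construction $c$ already almost-misses every $a \in \mathcal J$. I would therefore combine: either some shrinking $c'$ of $c$ lands in $\mathcal I_A$ and witnesses non-density (since $c'$ is infinite and almost-disjoint from everything in $\mathcal J$, it cannot be contained mod finite in any member of $\mathcal J^\downarrow$, and $c' \in \mathcal I_A$), or $c$ itself witnesses that $\mathcal J^\downarrow$ misses the "$B$-side," in which case one instead produces a witness in $\mathcal I_A$ by a symmetric application of $\mathfrak p$ starting from an infinite subset of $\Naturals$ chosen in $\mathcal I_A^+ \cap \mathcal I_B^+$ and thinned down.

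Assembling: given $\mathcal J \subset \mathcal I_A$ of size $<\mathfrak p$, I want an infinite $d \in \mathcal I_A$ not below any finite-modification of a member of $\mathcal J$; equivalently $d$ is infinite, $d \in \mathcal I_A$, and for every $a \in \mathcal J$ the set $d \setminus a$ is infinite. The cleanest route: enumerate $\mathcal J = \{a_\xi : \xi < \mu\}$, $\mu < \mathfrak p$. Working in the Boolean algebra $\pomega$ modulo the ideal generated by $\mathcal I_A$ restricted below a fixed $a^* \in \mathcal I_A$ which is not itself in $\mathcal J^\downarrow$ (such $a$ exists as long as $\mathcal I_A$ is not countably generated — if it is, $\delta(\mathcal I_A) = \omega$ but then $\mathfrak p = \omega$ is false, contradiction, so we may assume $\mathcal I_A$ is uncountably generated and in fact $\cf(\mathcal I_A) \geq \omega_1$), one uses that $\{a^* \setminus a_\xi : \xi < \mu\}$... and here the $\mathfrak p$-sized almost-disjoint refinement gives the desired $d$.

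The main obstacle, as flagged above, is the dichotomy of \emph{which side} the pseudo-intersection produced by $\mathfrak p$ lands on: a priori the witness could be swallowed by $\mathcal I_B$, so the real content of the proof is showing that one can always force the witness into $\mathcal I_A$. I expect this to require genuinely using that $x$ is a tie-point (not merely a non-P-point) — specifically that $A \setminus \{x\}$ is open and $x$ is in its closure, so that every nonempty relatively clopen subset of $A$ omitting $x$ gives an honest member of $\mathcal I_A$, and that one starts the construction from a set in $\mathcal I_A^+$ rather than an arbitrary infinite set.
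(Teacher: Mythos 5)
Your first half is sound: reducing the claim to ``every $\mathcal J\subseteq\mathcal I_A$ with $\card{\mathcal J}<\mathfrak p$ fails to be dense,'' extracting a pseudo-intersection $c$ of $\{\Naturals\setminus a:a\in\mathcal J\}\subseteq x$, and shrinking $c$ into $\mathcal I_A$ when $c^*$ meets $A\setminus\{x\}$ are all correct. But the case you yourself flag as the main obstacle, $c\in\mathcal I_B$, is not a side case --- it is the only case that matters, and your proposal does not close it. If $\mathcal J^{\downarrow}$ \emph{is} dense in $\mathcal I_A$, then any pseudo-intersection $c$ of the complements satisfies $c\cap e=^*\emptyset$ for every $e\in\mathcal I_A$ (otherwise $c\cap e$ would be an infinite member of $\mathcal I_A$ with an infinite subset inside some $J\in\mathcal J$, contradicting $c\cap J=^*\emptyset$), so $c\in\mathcal I_A^\perp=\mathcal I_B$ automatically: a set that \emph{avoids} every $J$ can never by itself refute density, and your branch (i) only fires when density has already failed. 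The two repairs you sketch do not work as stated. Starting from a set in $\mathcal I_A^+\cap\mathcal I_B^+$ and ``thinning down'' gives no control over which side the thinned set lands on, so the same obstruction recurs; the reduction ``$\mathcal I_A$ is not countably generated, else $\delta(\mathcal I_A)=\omega$ contradicts $\mathfrak p>\omega$'' assumes the proposition being proved; and the decisive step (``the $\mathfrak p$-sized almost-disjoint refinement gives the desired $d$'') is an ellipsis. Moreover, in the assembling paragraph you switch to the wrong target: ``$d\setminus a$ infinite for all $a\in\mathcal J$'' only says $d\notin\mathcal J^{\downarrow}$ mod finite, whereas non-density requires $d\cap a$ finite for all $a\in\mathcal J$, i.e.\ that no infinite subset of $d$ lies in $\mathcal J^{\downarrow}$.

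The missing idea, which is the content of the paper's proof, is to use a \emph{splitting} set rather than an avoiding set. Bell's theorem gives a single $C$ such that both $C$ and $\Naturals\setminus C$ meet every infinite set $J\setminus\bigcup\mathcal J'$ ($J\in\mathcal J$, $\mathcal J'\in[\mathcal J]^{<\omega}$) in an infinite set; replacing $C$ by its complement if necessary, $C\notin x$, so $C^*\cap A$ and $C^*\cap B$ are disjoint compact sets and $C$ decomposes as $a\cup b$ with $a\in\mathcal I_A$, $b\in\mathcal I_B$. Since $b\cap J$ is finite for every $J\in\mathcal J$, we get $C\cap J\subseteq^* a$ for all $J$; the other half of the splitting property then shows that no finite union $\bigcup\mathcal J'$ almost covers $a$ (if some $J\setminus\bigcup\mathcal J'$ is infinite, $C$ meets it infinitely, putting infinitely many points of $a$ outside $\bigcup\mathcal J'$; if instead every $J$ is almost covered by $\bigcup\mathcal J'$, density would make $(\bigcup\mathcal J')^*$ a clopen neighbourhood of $x$ inside $A$, so $x$ would be isolated in $B$). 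One final application of $\mathfrak p$ to $\{a\setminus\bigcup\mathcal J':\mathcal J'\in[\mathcal J]^{<\omega}\}$ produces the honest witness in $\mathcal I_A$. It is precisely this forced presence of $C$ on the $A$-side that the pseudo-intersection approach cannot reproduce.
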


\begin{proof} If $\mathcal J\subset \mathcal I_A$ has cardinality less
  than $\mathfrak p$, there is, by Solovay's Lemma (and Bell's
  Theorem) an infinite set $C\subset \Naturals$ such that $C$ and
  $\Naturals \setminus C$ each meet
  every infinite set of the form
 $J\setminus (\bigcup \mathcal
J')$ where $\{J\}\cup\mathcal J'\in [\mathcal
 J]^{<\omega}$. 
 We may assume that $C\notin x$, hence there are $a\in \mathcal
 I_A$ and $b\in \mathcal I_B$ such that $C\subset a\cup b$. However no
 finite union from $\mathcal J$ covers $a$ showing that $\mathcal
 J^\downarrow $  can not be dense in $\mathcal I_A$. 
\end{proof}

Although it does not seem to be 
completely trivial, it can be shown that PFA
implies there are no tie-points (the hardest case to eliminate is
those of $\mathfrak b$-type $(\omega_1,\omega_1)$)). 

\begin{question} Does $\mathfrak p>\omega_1$ imply there are no
  tie-points of $\mathfrak b$-type $(\omega_1,\omega_1)$?
\end{question}

Analogous to tie-points, we also define a tie-set: say that
$K\subset\betan$ is a tie-set if $\betan = A\tie{K} B$ and $K=A\cap
B$, $A=\overline{A\setminus K}$, and $B=\overline{B\setminus K}$. Say
that $K$ is a symmetric tie-set if there is an involution $F$ such
that $K=\fix(F)$ and $F[A]=B$. 

\begin{question} If $F$ is an involution on $\betan$ such that
  $K=\fix(F)$ has empty interior, is $K$ a (symmetric) tie-set?
\end{question}

\begin{question} Is there some natural restriction on which compact
  spaces can (or can not) be homeomorphic to the fixed point set of
  some involution of $\betan$?
\end{question}

Again, we note a possible application to 2-to-1 maps.

\begin{proposition} Assume that $F$ is an involution of $\betan$ with
  $K=\fix(F)\neq\emptyset$. Further assume that $K$ has a symmetric
  tie-point $x$ (i.e. $K=A\tie{x} B$), then $\betan$ has a 2-to-1
  continuous image which has a symmetric tie-point (and possibly
$\betan$ does not have such a tie-point).
\end{proposition}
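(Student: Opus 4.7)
The plan is to exhibit $Y = \betan/F$ as the desired 2-to-1 continuous image, with quotient map $\pi \colon \betan \to Y$ (exactly 2-to-1 off $K$ and the identity on $K$); this gives the 2-to-1 image in the standard sense. The subspace $\pi(K) \subset Y$ is a homeomorphic copy of $K$, so $x$ remains a symmetric tie-point of $\pi(K)$, witnessed by $A$, $B$ together with the involution $H \colon K \to K$ having $\fix(H)=\{x\}$ and $H(A)=B$ (obtained by extending the homeomorphism $A \to B$ that comes with the symmetric tie-point structure).

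The heart of the proof is to promote $x$ from a symmetric tie-point of $K$ to a symmetric tie-point of $Y$ itself. I would aim at a decomposition $Y = \tilde A \tie{x} \tilde B$ with $\tilde A \cap K = A$, $\tilde B \cap K = B$, and $\tilde A \equiv_x \tilde B$, realized by an involution $G \colon Y \to Y$ that extends $H$ on $\pi(K)$ and satisfies $\fix(G) = \{x\}$. The natural strategy is to partition $Y \setminus K = (\betan \setminus K)/F$ into two pieces $U_A, U_B$ that are clopen in $Y \setminus K$, with $U_A$ accumulating only onto $A \setminus \{x\}$ and $U_B$ only onto $B \setminus \{x\}$. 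Then $\tilde A := A \cup \overline{U_A}$ and $\tilde B := B \cup \overline{U_B}$ yield the decomposition, and $G$ is built by combining $H$ on $K$ with a homeomorphism $\overline{U_A} \to \overline{U_B}$ induced by using $F$ to transport each orbit on the $A$-side to the orbit prescribed by $H$ on the $B$-side.

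The main obstacle is producing the clopen partition of $Y \setminus K$ with the prescribed accumulation behaviour. Equivalently, one needs an $F$-invariant clopen subset $V \subset \betan$ whose trace on $K$ is essentially $A \setminus \{x\}$; then $U_A = \pi(V \setminus K)$ and $U_B$ is its complement in $Y \setminus K$. Producing $V$ should use the disjointness of $A \setminus \{x\}$ and $B \setminus \{x\}$ in the locally compact space $\betan \setminus \{x\}$, the $F$-invariance of $K$ (which lets one replace an arbitrary separator by its $F$-symmetrization), and the extremal disconnectedness of $\betan$ (which upgrades open separators to clopen ones). Verifying both the existence of $V$ and the continuity of the induced involution $G$ on $Y$ is where the technical work lies; all other components of the argument are routine once this partition is in hand.
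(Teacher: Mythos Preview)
The paper does not give a proof of this proposition; it is stated as a motivating observation and immediately followed by a question. So there is no ``paper's proof'' to compare against, but your proposal has a genuine gap that should be addressed.

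Your first step already fails: the quotient map $\pi\colon \Nstar \to \Nstar/F$ is \emph{not} a 2-to-1 map in the sense used throughout the paper (``(precisely) 2-to-1'', as in the abstract and the discussion surrounding Proposition~1.1). Since $K=\fix(F)\neq\emptyset$, the map $\pi$ is 1-to-1 on $K$, so $Y=\Nstar/F$ is only an at-most-2-to-1 image. The phrase ``2-to-1 image in the standard sense'' does not rescue this.

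The construction the paper is pointing toward is the one visible in the proof of the CH proposition and in the remark following Proposition~1.1: split $\Nstar$ as $\Nstar_1\oplus\Nstar_2$, map each summand onto $Y_i=\Nstar_i/F$, and then identify the two copies of $K$ via the involution $H$ on $K$ (the one witnessing that $x$ is a symmetric tie-point of $K$). The composite $\Nstar\to Z:=Y_1\cup_{H}Y_2$ is then exactly 2-to-1: points outside $K$ have two preimages from the $F$-quotient, and each point of the glued $K$ has one preimage in $K_1$ and one in $K_2$. The swap $\sigma$ interchanging $Y_1$ and $Y_2$ is a well-defined involution of $Z$ (because $H^2=\id$) with $\fix(\sigma)=\{x\}$. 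This gives the ``symmetric'' data directly, with no need to extend $H$ over a single copy of $Y$.

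By contrast, your ``main obstacle'' --- producing an $F$-invariant clopen $V\subset\Nstar$ whose trace on $K$ is $A\setminus\{x\}$, and then extending $H$ to an involution of all of $Y$ --- is not just technical but is essentially the content of the paper's open Question~1.4 (whether $\fix(F)$ must be a tie-set), transported to $Y$. There is no reason to expect such a $V$ to exist in general, and no argument in your proposal produces one. The two-copy construction sidesteps this entirely: the required involution on the image comes for free from the symmetry of the construction rather than from extending $H$.
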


\begin{question} If $F$ is an involution of $\Nstar$, is the quotient
  space $\Nstar/F$ (in which each
 $\{x,F(x)\} $ is collapsed to a single
  point) a homeomorphic copy of $\betan$?
\end{question}

\begin{proposition}[CH] If $F$ is an involution of $\betan$, then the
  quotient space $\Nstar /F$ is homeomorphic to $\betan$. 
\end{proposition}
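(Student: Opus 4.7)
My plan is to verify, under CH, that $Y := \Nstar/F$ satisfies the hypotheses of Parovi\v{c}enko's characterization of $\Nstar$: that $Y$ is a zero-dimensional compact Hausdorff F-space of weight at most $\aleph_1$, without isolated points, and in which every nonempty $G_\delta$ has nonempty interior. Since $\mathbb N$ is precisely the set of isolated points of $\beta\mathbb N$, the involution $F$ preserves $\mathbb N$ and hence restricts to an involution of $\Nstar$; write $q:\Nstar\to Y$ for the quotient map. A basic observation used throughout is that $q$ is open, since $q^{-1}(q(U))=U\cup F(U)$ is open whenever $U$ is, and $q$ becomes a closed surjection from a compact space onto $Y$ once $Y$ is known to be Hausdorff.

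I would first handle Hausdorffness and zero-dimensionality together. Given distinct orbits $\{x,F(x)\}$ and $\{y,F(y)\}$ in $\Nstar$, pick a clopen $U\subset\Nstar$ containing the first and disjoint from the second, and replace $U$ by $U\cap F(U)$: this is clopen, $F$-invariant, still contains $\{x,F(x)\}$, and still misses $\{y,F(y)\}$, so its $q$-image is a clopen neighborhood of $[x]$ missing $[y]$. If $[x]$ were an isolated point of $Y$, then $q^{-1}(\{[x]\})=\{x,F(x)\}$ would be open in $\Nstar$, forcing $x$ or $F(x)$ to be isolated there, which is impossible. The weight of $Y$ is at most that of $\Nstar$, which equals $\mathfrak c=\aleph_1$ under CH.

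The more substantive steps are the F-space property and the $G_\delta$ condition, both of which reduce to pulling back along $q$. Given disjoint cozero sets $V_1,V_2\subseteq Y$, the preimages $q^{-1}(V_i)$ are disjoint $F$-invariant cozero sets in $\Nstar$; since $\Nstar$ is itself an F-space they have disjoint closures, and those closures are $F$-invariant, so (using that $q$ is closed) they push forward to disjoint closed supersets of $\overline{V_1}$ and $\overline{V_2}$ in $Y$ — the verification that no identification can merge them being an easy case analysis using $F$-invariance of $\overline{q^{-1}(V_1)}$. For the $G_\delta$ condition, if $G\subseteq Y$ is a nonempty $G_\delta$ then $q^{-1}(G)$ is a nonempty $G_\delta$ in $\Nstar$, hence has nonempty interior, and openness of $q$ transports this to a nonempty interior of $G$.

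The main obstacle is conceptual rather than technical: at several points one must promote an arbitrary neighborhood or cozero set to an $F$-invariant one, and the general device is to replace $U$ by $U\cap F(U)$ (to refine to an invariant neighborhood) or by $U\cup F(U)$ (to enlarge to an invariant superset); together with openness and closedness of $q$, these two operations suffice to transfer each Parovi\v{c}enko axiom from $\Nstar$ to $Y$. Once all the axioms have been verified, CH delivers $Y\cong\Nstar$.
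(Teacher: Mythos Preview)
Your argument is correct: verifying the Parovi\v{c}enko axioms for $\Nstar/F$ goes through exactly as you outline, with the $F$-invariance tricks $U\mapsto U\cap F(U)$ and $U\mapsto U\cup F(U)$ together with openness and closedness of $q$ doing all the work. (One cosmetic remark: in this paper $\betan$ already denotes $\beta\Naturals\setminus\Naturals=\Nstar$, so your opening sentence about $F$ preserving $\Naturals$ is superfluous, though harmless.)

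The paper's proof takes a genuinely different route. Rather than checking Parovi\v{c}enko directly, it invokes as a black box the CH theorem (from \cite{DTech1}) that every exactly 2-to-1 continuous image of $\Nstar$ is again $\Nstar$. When $\fix(F)=\emptyset$ this applies immediately. When $\fix(F)\neq\emptyset$, the authors take two copies of $\Nstar/F$ and glue them along the image of $\fix(F)$; the resulting space is a 2-to-1 image of $\Nstar\oplus\Nstar\cong\Nstar$, hence is $\Nstar$, and inside this copy of $\Nstar$ one then argues that $\fix(F)$ is a P-set and that a regular closed subset of $\Nstar$ with P-set boundary is (under CH) again $\Nstar$. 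Your approach is more self-contained, needing only Parovi\v{c}enko's theorem and elementary facts about finite-to-one open quotient maps; the paper's approach, while leaning on a heavier cited result, ties the proposition into the 2-to-1 map theme that motivates the whole paper and extracts along the way the structural fact that $\fix(F)$ is a P-set.
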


\begin{proof}
If $\fix(F)$ is
  empty, then $\Nstar/F$ is a 2-to-1 image of $\betan$, and so is a
  copy of $\betan$. If $\fix(F)$ is not empty, then consider two
  copies, $(\Nstar_1,F_1)$ and $(\Nstar_2,F_2)$, of $(\Nstar,F)$. The
  quotient space of $\Nstar_1/F_1 \oplus \Nstar_2/F_2$ obtained by
  identifying the two homeomorphic sets $\fix(F_1)$ and $\fix(F_2)$
  will be a 2-to-1-image of $\Nstar$, hence again a copy of
  $\Nstar$. Since $\Nstar_1\setminus \fix(F_1)$ and $\Nstar_2\setminus
  \fix(F_2)$ are disjoint and homeomorphic, it follows easily that
 $\fix(F)$ must be a P-set in $\Nstar$. It is trivial to verify that a
 regular closed set of $\Nstar$ with a P-set boundary will be (in a
 model of CH) a copy of $\Nstar$. Therefore the copy of $\Nstar_1/F_1$ in
 this final quotient space is a copy of $\Nstar$. 
\end{proof}

\section{a spectrum of tie-sets}

We adapt a method from \cite{BrSh642} to produce a model in which there
are tie-sets of specified $\mathfrak b\delta$-types.  We further
arrange that these tie-sets will themselves have tie-points but
unfortunately we are not able to make the tie-sets symmetric. In the
next section we make some progress in involving involutions.

\begin{theorem} Assume\label{main2}
 GCH and that
 $\Lambda$ is a set of regular uncountable
  cardinals such that for each $\lambda\in \Lambda$, $T_\lambda$ is a
  ${<}\lambda$-closed $\lambda^+$-Souslin tree. There is a  forcing
  extension in which there is a tie-set $K$ (of $\mathfrak
  b\delta$-type $(\mathfrak c,\mathfrak c)$) and for each $\lambda\in
  \Lambda$, there is a tie-set $K_\lambda$ of $\mathfrak b\delta$-type
  $(\lambda^+,\lambda^+)$ such that $K\cap K_\lambda$ is a single point
  which is a tie-point of $K_\lambda$. Furthermore, for
  $\mu\leq\lambda<\mathfrak c$, if $\mu\neq\lambda$ or $\lambda\notin
  \Lambda$, then there is no tie-set of $\mathfrak b\delta$-type
  $(\mu,\lambda)$. 
\end{theorem}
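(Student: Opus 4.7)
The plan is to construct the model by iterated forcing over a ground model of GCH, adapting the method of Brendle--Shelah \cite{BrSh642}. I would fix a regular cardinal $\kappa > \sup\{\lambda^+ : \lambda\in\Lambda\}$ in the ground model which will become $\mathfrak c$, and iterate with bookkeeping of length $\kappa$ two kinds of posets: (a) a generic splitter that accumulates a matched pair of towers of subsets of $\Naturals$, generating ideals $\mathcal I_A,\mathcal I_B$ whose duality yields the tie-set $K$ of type $(\mathfrak c,\mathfrak c)$; and (b) for each $\lambda\in\Lambda$ a tree-indexed forcing $\Poset_\lambda$ derived from $T_\lambda$ whose generic produces $\mathcal I_{A_\lambda},\mathcal I_{B_\lambda}$ and hence $K_\lambda$.

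The key ingredient is $\Poset_\lambda$: its conditions are tree-like partial labellings that attach, to initial segments of $T_\lambda$, finite approximations to a pair of almost-disjoint infinite subsets of $\Naturals$. Along any generic branch this produces a $\lambda^+$-chain of sets cofinal in each of $\mathcal I_{A_\lambda}$ and $\mathcal I_{B_\lambda}$. The ${<}\lambda$-closure of $T_\lambda$ forces both $\mathfrak b(\mathcal I_{A_\lambda})$ and $\delta(\mathcal I_{A_\lambda})$ to equal $\lambda^+$, since any candidate smaller cofinal subfamily would have to split a node of $T_\lambda$ at a level below $\lambda^+$, contradicting closure. The $\lambda^+$-Souslin property of $T_\lambda$ delivers the $\lambda^+$-cc needed for preservation in the main iteration. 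The shared tie-point of $K$ and $K_\lambda$ is arranged by the bookkeeping: at a designated initial stage, fix an ultrafilter on the common boundary of $A$ and $B$ and arrange all later stages to preserve it as a tie-point of $A_\lambda \cap B_\lambda$.

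Standard properness and chain-condition arguments for the iteration should then verify that $K$ has type $(\mathfrak c,\mathfrak c)$, that each $K_\lambda$ has type $(\lambda^+,\lambda^+)$, that no relevant cofinality is collapsed, and that $K\cap K_\lambda$ is the single intended tie-point. Proposition~\ref{mathp} gives the lower bound $\mathfrak p \leq \lambda^+$, consistent with the construction since $\mathfrak p$ in the extension will equal the smallest $\lambda^+$ with $\lambda\in\Lambda$.

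The hard part is the \emph{non-existence} of tie-sets of the forbidden types $(\mu,\lambda)$ with $\mu\leq\lambda<\mathfrak c$ and either $\mu\neq\lambda$ or $\lambda\notin\Lambda$. Given a name $\dot K'$ for such a putative tie-set with corresponding ideal names $\dot{\mathcal I}_{A'},\dot{\mathcal I}_{B'}$, one must reflect these names to an intermediate submodel and argue by dichotomy: either the ideals are already captured by some stage of cofinality below $\lambda$, contradicting the claimed $\delta(\mathcal I_{A'})=\lambda$; or producing a genuinely $\lambda^+$-cofinal structure requires a ${<}\lambda$-closed $\lambda^+$-Souslin tree--which is not present when $\lambda\notin\Lambda$, and when $\lambda\in\Lambda$ with $\mu\neq\lambda$ forces the $\mathfrak b$ and $\delta$ invariants to coincide on both sides, again a contradiction. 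Turning this rigidity-of-types heuristic into a theorem is where the argument must do real work, requiring a dedicated preservation theorem for the alternating iteration that is more elaborate than the one in \cite{BrSh642}, and this will be the heart of the proof.
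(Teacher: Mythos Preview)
Your proposal has the right instinct about using the Souslin trees to calibrate the $\mathfrak b\delta$-types, but it misidentifies the architecture of the forcing, and this matters for both the existence and the non-existence clauses.

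The paper does \emph{not} interleave tree-derived posets $\Poset_\lambda$ into a ccc iteration. The overall forcing is a product $P\times\Pi_{\lambda\in\Lambda}T_\lambda$, where $P$ is a finite-support ccc iteration of length $\theta$ and the second factor is the Easton-supported product of the Souslin trees themselves. The iteration $P$ begins with a single ccc poset $Q_0$ that adds, for \emph{every} node $t$ of $T=\{\emptyset\}\cup\bigcup_\lambda T_\lambda$, a partition $(a_t,x_t,b_t)$ of $\Naturals$ satisfying the coherence conditions of Proposition~\ref{tie-sets}; subsequent stages of $P$ alternate between arbitrary ccc posets (yielding MA) and posets building a tower of $T$-splitters (yielding the $(\mathfrak c,\mathfrak c)$ tie-set $K$). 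Only \emph{after} $P$ does one force with each $T_\lambda$; the generic branch $\rho_\lambda$ then selects $K_\lambda=\bigcap_{\alpha}x_{\rho_\lambda(\alpha)}^*$ out of the structure already laid down by $Q_0$. Your remark that ``the $\lambda^+$-Souslin property of $T_\lambda$ delivers the $\lambda^+$-cc needed for preservation in the main iteration'' is exactly where your plan breaks: $\lambda^+$-cc is not ccc for $\lambda>\omega$, so the trees cannot live inside the ccc iteration. They are forced as a separate, non-ccc product, and Lemma~\ref{distrib} (the Easton lemma) is used to show this product is $\omega_1$-distributive over $V[G\cap P]$ and hence adds no reals.

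This two-stage architecture also makes the non-existence clause far easier than you anticipate. In $V[G\cap P]$ one has MA and $\mathfrak p=\mathfrak c=\theta$, so by Proposition~\ref{mathp} there are no tie-sets of $\delta$-type $(\kappa,\mu)$ with $\kappa<\theta$ at all in that model. The remaining work is Corollary~\ref{omit} (via Lemma~\ref{chains}): if a tie-set of a forbidden type $(\kappa,\mu)$ existed in $V[G]$, its cofinal chains in $\mathcal I_A$ and $\mathcal I_B$ could be reflected back to $V[G\cap P]$, using that each $T_\lambda$ is $\lambda^+$-Souslin and the product adds no reals. No rigidity-of-types dichotomy or bespoke preservation theorem is needed; the genuinely delicate step in the paper is showing that the iteration $P$ stays ccc at the $T$-splitter stages of cofinality $\omega_1$ (Lemma~\ref{main2ccc}), not the non-existence argument you flagged.
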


We will assume that our Souslin trees are well-pruned and are 
ever $\omega$-ary  branching. That is, if $T_\lambda$ is a
$\lambda^+$-Souslin tree, we assume that for each $t\in T$, $t$ has
exactly $\omega$
 immediate successors denoted $\{ t^\frown \ell :\ell\in \omega\}$
and that $\{
s\in T_\lambda : t<s\}$ has cardinality $\lambda^+$ (and so has
successors on every level). A poset is ${<}\kappa$-closed if every
directed subset of cardinality less than $\kappa$ has a lower bound. A
poset is ${<}\kappa$-distributive if the intersection of any family of
fewer than $\kappa$ dense open subsets is again dense.  For a cardinal
$\mu$, let $\mu^-$ be the minimum cardinal such that $(\mu^-)^+\geq
\mu$ (i.e. the predecessor if $\mu$ is a successor).

The main idea of the construction is nicely  illustrated by the
following.

\begin{proposition} Assume\label{tie-sets}
 that $\betan $ has no tie-sets of
  $\mathfrak b\delta$-type $(\kappa_1,\kappa_2)$ for some
  $\kappa_1\leq \kappa_2<\mathfrak c$.  Also assume that
  $\lambda^+<\mathfrak c$ is such that $\lambda^+$ is distinct from
  one of $\kappa_1,\kappa_2$ and that
$T_\lambda$ is a $\lambda^+$-Souslin
  tree and  $\{ (a_t, x_t , b_t ) : t\in T_\lambda\}\subset
  ([\Naturals]^\omega )^3 $ satisfy that, for $t<s\in T_\lambda$:
\begin{enumerate}
\item $\{a_t, x_t,b_t\}$ is a partition of $\Naturals$,
\item $x_{t^\frown j}\cap x_{t^\frown \ell}=\emptyset$ for $j<\ell$,
\item $x_s\subset^* x_t$, $a_t\subset^* a_s$, and 
$b_t\subset^* b_s$,
\item for each $\ell\in \omega$, $x_{t^\frown \ell+1}\subset^*
 a_{t^\frown
    \ell}$ and $x_{t^\frown \ell+2}\subset^* b_{t^\frown \ell}$,
\end{enumerate}
then if $\rho\in [T_\lambda]^{\lambda^+}$ is a generic branch
(i.e. $\rho(\alpha)$ is an element of the $\alpha$-th level of
$T_\lambda$ for each $\alpha\in \lambda^+$), then $K_\rho =
\bigcap_{\alpha\in \lambda^+} x_{\rho(\alpha)}^*$ is a tie-set of
$\betan$ of $\mathfrak b\delta$-type $(\lambda^+,\lambda^+)$, and 
there is no tie-set of $\mathfrak b\delta$-type
$(\kappa_1,\kappa_2)$. 

\begin{enumerate}
\setcounter{enumi}{4}
\item Assume further that $\{ (a_\xi, x_\xi, b_\xi)
 : \xi\in   \mathfrak c\}$ is a family of partitions of $\Naturals$
 such that  $\{ x_\xi : \xi \in \mathfrak c\}$ is 
a mod
  finite descending family of subsets of
$\Naturals$ such  that for each $Y\subset \Naturals$, there is a
maximal antichain  $A_Y\subset T_\lambda$ and some $\xi\in \mathfrak c$ 
such that for each $t\in A_Y$, $x_t\cap x_\xi$ is a proper subset of 
either $ Y$ or $ \Naturals\setminus Y$, 
then $K = \bigcap_{\xi\in \mathfrak c} x_\xi^*$ meets $K_\rho$ in
a single point $z_\lambda$.
\item If we assume further that for each $\xi<\eta<\mathfrak c$, 
$a_\xi \subset^* a_\eta$ and $b_\xi  \subset^* b_\eta$, and for each
$t\in T_\lambda$, $\eta$ may be chosen so that  $x_t$ meets each of 
$(a_\eta\setminus a_\xi)$ and $(b_\eta\setminus b_\xi)$, 
 then $z_\lambda$ is a   tie-point of $K_\rho$. 
\end{enumerate}
\end{proposition}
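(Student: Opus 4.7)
The plan is (i) to identify the closed sets witnessing the tie-set using the partition and monotonicity conditions (1) and (3); (ii) to verify the accumulation, cofinality, and unboundedness by exhibiting dense subsets of $T_\lambda$ that any generic branch $\rho$ must meet; (iii) to push the lower bound on $\mathfrak b, \delta$ using regularity of $\lambda^+$ and the $\lambda^+$-cc; and finally (iv) to handle (5) and (6).

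Set $A_\rho = \Nstar \setminus \bigcup_\alpha b^*_{\rho(\alpha)}$ and $B_\rho = \Nstar \setminus \bigcup_\alpha a^*_{\rho(\alpha)}$. From (1) and (3) one directly checks that $A_\rho, B_\rho$ are closed, $A_\rho \cap B_\rho = K_\rho$, $A_\rho \cup B_\rho = \Nstar$, $a^*_{\rho(\alpha)} \subset A_\rho \setminus K_\rho$, and $b^*_{\rho(\alpha)} \subset B_\rho \setminus K_\rho$. Because $T_\lambda$ is ${<}\lambda$-closed with $\lambda \geq \omega_1$, it adds no reals, so every clopen $U = u^*$ comes from a ground-model $u$; the set
\[ D_u = \{t \in T_\lambda : u \cap a_t \text{ is infinite, or } u \cap x_t \text{ is finite}\} \]
is dense in $T_\lambda$, since at $t$ with $u \cap x_t$ infinite, either some $u \cap x_{t^\frown \ell}$ is finite (take that successor) or, by (4), $u \cap x_{t^\frown 1}$ is infinite, which via $x_{t^\frown 1} \subset^* a_{t^\frown 0}$ makes $u \cap a_{t^\frown 0}$ infinite. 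Genericity of $\rho$ realizes $D_u$ at some $\alpha$; in either alternative one obtains $U \cap (A_\rho \setminus K_\rho) \neq \emptyset$ whenever $U \cap A_\rho \neq \emptyset$, so $A_\rho = \overline{A_\rho \setminus K_\rho}$ (and dually $B_\rho = \overline{B_\rho \setminus K_\rho}$), exhibiting $K_\rho$ as a tie-set.

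The identical template with the dense set
\[ D'_a = \{t \in T_\lambda : a \subset^* a_t,\ \text{or } a \cap b_t \text{ is infinite, or } a \cap x_t \text{ is finite}\} \]
shows that every ground-model $a \in \mathcal I_{A_\rho}$ satisfies $a \subset^* a_{\rho(\alpha)}$ for some $\alpha$, so $\{a_{\rho(\alpha)} : \alpha < \lambda^+\}$ is dense in $\mathcal I_{A_\rho}$, whence $\delta(\mathcal I_{A_\rho}) \leq \lambda^+$; since $\mathcal I_{A_\rho}$ is a proper ideal, any dense family is unbounded, so $\mathfrak b(\mathcal I_{A_\rho}) \leq \delta(\mathcal I_{A_\rho}) \leq \lambda^+$. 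Conversely, any $\mathcal J \subset \mathcal I_{A_\rho}$ of size $\leq \lambda$ consists of ground-model reals each absorbed by some $a_{\rho(\alpha_J)}$, and by regularity of $\lambda^+$, $\alpha_0 := \sup_J \alpha_J < \lambda^+$, so $a_{\rho(\alpha_0)}$ bounds $\mathcal J$, giving $\mathfrak b \geq \lambda^+$. Symmetric arguments give $\mathfrak b(\mathcal I_{B_\rho}) = \delta(\mathcal I_{B_\rho}) = \lambda^+$. Preservation of ``no tie-set of $\mathfrak b\delta$-type $(\kappa_1,\kappa_2)$'' is the main obstacle: one must show that a hypothetical such tie-set in the extension is witnessed by ground-model reals which, via $\lambda^+$-cc plus the absence of new reals, assemble into a ground-model tie-set of the forbidden type, contradicting the assumption. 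The delicate point is the interaction between $\lambda^+$ and the cardinals $\kappa_1, \kappa_2$ in locating a common intermediate stage at which the witnessing families are all present; this is where I expect to spend the bulk of the technical effort.

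For (5), any $p \in K \cap K_\rho$ is an ultrafilter containing every $x_\xi$ and every $x_{\rho(\alpha)}$. Fix $Y \subset \Naturals$: genericity puts $\rho$ through a unique $t \in A_Y$, and the hypothesis that $x_t \cap x_{\xi_Y}$ is properly contained in $Y$ or in $\Naturals \setminus Y$ forces the decision $Y \in p$ or $Y \notin p$, pinning $p$ down uniquely; non-emptiness is a finite-intersection-property check on the two descending chains. For (6), let $A_K = \Nstar \setminus \bigcup_\xi b_\xi^*$ and $B_K = \Nstar \setminus \bigcup_\xi a_\xi^*$. The monotonicity $a_\xi \subset^* a_\eta$ and $b_\xi \subset^* b_\eta$ mirrors the earlier verification to give $A_K \cup B_K \supset K_\rho$ and $A_K \cap B_K \cap K_\rho = \{z_\lambda\}$, while the splitting hypothesis --- that each $x_t$ meets both $a_\eta \setminus a_\xi$ and $b_\eta \setminus b_\xi$ for suitable $\eta > \xi$ --- lets one iterate along $\rho$ to produce infinite subsets of $a_\eta^* \cap K_\rho \cap U$ and $b_\eta^* \cap K_\rho \cap U$ inside any prescribed clopen neighbourhood $U$ of $z_\lambda$, exhibiting $z_\lambda$ as a tie-point of $K_\rho$.
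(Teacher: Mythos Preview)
Your approach matches the paper's, and your verification that $K_\rho$ is a tie-set of $\mathfrak b\delta$-type $(\lambda^+,\lambda^+)$ via explicit dense sets is correct---in fact more detailed than the paper, which only checks the tie-set condition and leaves the type implicit. Your treatment of (5) and (6) likewise parallels the paper's brief argument.

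The part you flag as the main obstacle---preservation of ``no tie-set of type $(\kappa_1,\kappa_2)$''---is indeed where the content lies, and the paper's argument is a case analysis you should anticipate. The key device is that every dense open subset of a $\lambda^+$-Souslin tree contains an entire level, so for each $T_\lambda$-name $\dot c_\gamma$ (resp.\ $\dot d_\xi$) in the putative cofinal chains there is a level $\alpha_\gamma$ (resp.\ $\beta_\xi$) at which every node decides it. If neither $\kappa_i$ equals $\lambda^+$, regularity and a pigeonhole on levels produce a single node $t$ deciding cofinally many $\dot c_\gamma$ and cofinally many $\dot d_\xi$, yielding a ground-model tie-set outright. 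The subtle case is $\kappa_1 < \kappa_2 = \lambda^+$ (and symmetrically $\kappa_1 = \lambda^+ < \kappa_2$): here one can fix $t$ deciding the entire short chain $\mathfrak C = \{c_\gamma : \gamma < \kappa_1\}$, but no node decides cofinally many $\dot d_\xi$. The paper instead sets $\mathfrak D = \{d : (\exists s > t)\ s \Vdash d^* \subset D\}$, argues that $\mathfrak D = \mathfrak C^\perp$ already in $V$, and observes that forcing with $T_\lambda$ can neither raise $\mathfrak b(\mathfrak D)$ nor lower $\delta(\mathfrak D)$; this produces the forbidden ground-model tie-set. Your sketch (``assemble into a ground-model tie-set via $\lambda^+$-cc plus no new reals'') is the right intuition for the first case but will not by itself deliver the $\mathfrak C^\perp$ maneuver needed when one of the $\kappa_i$ equals $\lambda^+$, so be sure to treat that case separately.
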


\begin{proof}
To show that $K_\rho$ is a tie-set
it is sufficient to show
that $K_\rho\subset \overline{\bigcup_{\alpha \in \lambda^+}
  a_\alpha^*}
\cap \overline{\bigcup_{\alpha \in \lambda^+}
  b_\alpha^*}$. Since $T_\lambda$ is a $\lambda^+$-Souslin tree, no 
new subset of $\lambda$ is added when forcing with $T_\lambda$. Of
course we use that $\rho$ is $T_\lambda$ is generic, 
so
assume that $Y\subset \Naturals$ and that 
some $t\in T_\lambda$ forces that $Y^*\cap K_\rho $ is not empty. We
must show that there is some $t<s$ such that $s$ forces that $a_s\cap
Y$ and $b_s\cap Y$ are both
 infinite.  However, we know that $x_{t^\frown \ell}\cap Y$ is
 infinite for each $\ell\in \omega$  since
 $t^\frown \ell\forces{T_\lambda}{K_\rho\subset x_{t^\frown\ell}^*}$.
Therefore, by condition 4,
for each $\ell\in \omega$,  $Y\cap a_{t^\frown \ell}$ and $Y\cap
b_{t^\frown \ell}$  are both infinite.

Now let $\kappa_1,\kappa_2$ be regular cardinals at least one of which
is 
distinct from $\lambda^+$. Recall that forcing with $T_\lambda$
preserves cardinals.
Assume that in $V[\rho]$, 
$K\subset \Nstar$ and $\Nstar = C\tie{K}D$ with
$\mathfrak b(\mathcal I_C)=\delta(\mathcal I_C)=\kappa_1$ and
$\mathfrak b(\mathcal I_D)=\delta(\mathcal I_D)=\kappa_2$. 
In $V$, 
let $\{ c_\gamma : \gamma\in \kappa_1\}$ be $T_\lambda$-names for the
increasing cofinal sequence in $\mathcal I_C$ and
let $\{ d_\xi : \xi \in \kappa_2\}$ be $T_\lambda$-names for the
increasing cofinal sequence in $\mathcal I_D$. Again using the fact
that $T_\lambda$ adds no new subsets of $\Naturals$ and the fact that
every dense open subset of $T_\lambda$ will contain an entire level of
$T_\lambda$, we may choose ordinals $\{ \alpha_\gamma : \gamma\in
\kappa_1\}$ 
and $\{\beta_\xi : \xi \in \kappa_2\}$ such that each $t\in
T_\lambda$, if $t$ is on level $\alpha_\gamma$ it will force a value
on $c_\gamma$ and if $t$ is on level $\beta_\xi$ it will force a value
on $d_\xi$. If $\kappa_1 < \lambda^+$, then $\sup\{\alpha_\gamma
:\gamma\in \kappa_1\}<\lambda^+$, hence there
are  $t\in T_\lambda$ which force a value on each $c_\gamma$. If
$\lambda^+<\kappa_2$, then there is some $\beta<\lambda^+$, such that
$\{\xi \in \kappa_2 : \beta_\xi \leq \beta\}$ has cardinality
$\kappa_2$. Therefore there is some $t\in T_\lambda$ such that $t$
forces a value on $d_\xi$ for a cofinal set of $\xi\in \kappa_2$. 
Of course, if neither $\kappa_1$ nor $\kappa_2$ is equal to
$\lambda^+$, then we have a condition that decided cofinal families of
each of $\mathcal I_C$ and $\mathcal I_D$. This implies that $\Nstar$
already has tie-sets of $\mathfrak b\delta$-type
$(\kappa_1,\kappa_2)$. 

If $\kappa_1<\kappa_2=\lambda^+$, then fix $t\in T_\lambda$ deciding 
$\mathfrak C = \{ c_\gamma : \gamma\in \kappa_1\}$, and let 
$\mathfrak D = \{ d\subset\Naturals : (\exists s>t)
s\forces{T_\lambda}{d^*\subset D}\}$. It follows easily that
$\mathfrak D = \mathfrak C^\perp$. But also, since forcing with
$T_\lambda$ can not raise $\mathfrak b(\mathfrak D)$ and can not lower
$\delta (\mathfrak D)$, we again have that there are tie-sets of 
$\mathfrak b\delta$-type in $V$.

The case $\kappa_1=\lambda^+ < \kappa_2$ is similar.

Now assume we have the family $\{ (a_\xi, x_\xi, b_\xi) : \xi \in
\mathfrak c\}$ as in (5) and (6)
 and set $K=\bigcap_\xi x_\xi^*$, $A=\{K\}\cup 
\bigcup\{ a_\xi^* : \xi \in \mathfrak c\}$, and $B=
\{K\}\cup \bigcup\{b_\xi^* : \xi\in \mathfrak c\}$. 
It is routine to see that (5) ensures that the family
 $\{ x_\xi \cap x_{\rho(\alpha)} : \xi\in \mathfrak c
\ \mbox{and}\ \alpha\in \lambda^+\}$ generates an ultrafilter when
 $\rho$ meets each maximal antichain $A_Y$ ($Y\subset \Naturals$).
Condition (6) clearly ensures that $A\setminus K $ and $B\setminus K$
each meet $(x_\xi\cap x_{\rho(\alpha)})^*$ for each $\xi\in \mathfrak
c$ and $\alpha\in \lambda^+$.  Thus $A\cap K_\rho$ and $B\cap K_\rho$
witness that $z_\lambda$ is a tie-point of $K_\rho$. 
\end{proof}

Let $\theta$ be a regular cardinal greater than $\lambda^+$ for all
$\lambda\in \Lambda$. We will need the following well-known Easton
lemma (see \cite[p234]{Jech}). 

\begin{lemma} Let $\mu$ be a regular cardinal\label{distrib}
 and 
 assume that $P_1$ is a poset
  satisfying the $\mu$-cc. Then any ${<}\mu$-closed poset $P_2$ 
remains   ${<}\mu$-distributive after forcing with $P_1$. Furthermore
any ${<}\mu$-distributive poset remains ${<}\mu$-distributive after forcing
 with a poset of cardinality less than $\mu$.
\end{lemma}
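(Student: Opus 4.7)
The statement has two parts, and in both I would replace the definition of ${<}\mu$-distributivity by its equivalent form: no new function from some $\kappa < \mu$ into the ordinals is added.

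For the first part the plan is a standard closure-and-antichain argument on the product $P_1 \times P_2$. Given a $(P_1 \times P_2)$-name $\dot f$ for a function $\kappa \to \mathrm{Ord}$ and any condition $(p_1, p_2)$, I would build by induction on $\alpha \le \kappa$ a descending sequence $q_\alpha \in P_2$ below $p_2$ together with $P_1$-names $\dot\tau_\alpha$ such that $(1_{P_1}, q_{\alpha+1}) \Vdash \dot f(\alpha) = \dot\tau_\alpha$. At a successor step, density lets me pick a maximal antichain $A_\alpha \subseteq P_1$ so that for each $r \in A_\alpha$ there is $s_r \le q_\alpha$ with $(r, s_r)$ deciding $\dot f(\alpha)$ as some $\sigma_r$; the $\mu$-cc of $P_1$ forces $|A_\alpha| < \mu$, and the ${<}\mu$-closure of $P_2$ then produces a common lower bound $q_{\alpha+1}$ of $\{s_r : r \in A_\alpha\}$. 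The name $\dot\tau_\alpha$ is the obvious one reading off $\check\sigma_r$ on $r \in A_\alpha$. Limit stages and the closing stage $\alpha = \kappa$ are handled by ${<}\mu$-closure, so $(1_{P_1}, q_\kappa)$ forces $\dot f$ to equal the $P_1$-name $\langle \dot\tau_\alpha : \alpha < \kappa\rangle$, yielding $f \in V^{P_1}$.

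For the second part, where $P_2$ is only ${<}\mu$-distributive but $|P_1| < \mu$, I would use the identity $V^{P_1 \times P_2} = (V^{P_2})^{P_1}$ to shift the viewpoint into $V^{P_2}$. Because $P_2$ is ${<}\mu$-distributive in $V$, it preserves $\mu$ and adds no new subset of a set of $V$-size less than $\mu$, so $|P_1|^{V^{P_2}} = |P_1|^V < \mu$. A $(P_1 \times P_2)$-name $\dot f$ for $f : \kappa \to \mathrm{Ord}$ corresponds to a $P_1$-name $\dot h$ lying in $V^{P_2}$. For each $\alpha < \kappa$ I would replace $\dot h(\alpha)$ by a nice $P_1$-name coded in $V^{P_2}$ by a pair $(A_\alpha, \sigma^{(\alpha)})$, with $A_\alpha \subseteq P_1$ a maximal antichain and $\sigma^{(\alpha)} : A_\alpha \to \mathrm{Ord}$ recording the decided value. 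Fixing in $V$ an enumeration of $P_1$, the whole coding packages into a single function from a $V$-set of size $<\mu$ into $\mathrm{Ord}$; ${<}\mu$-distributivity of $P_2$ forces that function to lie in $V$. Hence $\dot h$ is realized by a $P_1$-name already in $V$, and $f \in V^{P_1}$.

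The main obstacle I anticipate is in the second part: making precise that the nice-name encoding of $\dot h$, manufactured in $V^{P_2}$, is in fact an object of $V$. This pivots on ${<}\mu$-distributivity applied to the single collated function just described, together with the preservation of cardinals and of the notion of ``size less than $\mu$'' under $P_2$. The first part, by contrast, is essentially bookkeeping once one notices that $\mu$-cc and ${<}\mu$-closure combine perfectly across the product.
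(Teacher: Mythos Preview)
Your proposal is correct for both parts, though your route differs from the paper's in each.

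For the first part, the paper argues by switching the order in the product: since $P_2$ is ${<}\mu$-closed, forcing with $P_2$ adds no new antichains to $P_1$, so $P_1$ remains $\mu$-cc in $V^{P_2}$; hence any $\gamma$-sequence of ordinals in $V^{P_2 \times P_1}$ is given by a $P_1$-name already in $V$, and commutativity of the product finishes. Your direct inductive construction is equally valid and more explicit. One small technical point: the paper defines ${<}\mu$-closed as ``every \emph{directed} subset of cardinality less than $\mu$ has a lower bound'', so to obtain $q_{\alpha+1}$ you should enumerate $A_\alpha$ in order type $<\mu$ and build the $s_r$'s as a descending chain rather than choosing them independently; otherwise $\{s_r : r \in A_\alpha\}$ need not be directed.

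For the second part, the paper gives a short argument directly with dense open sets: for a $P_1$-name $\dot D$ of a dense open subset of $P_2$, set $D_p = \{q \in P_2 : (\exists p' \le p)\ p' \Vdash q \in \dot D\}$ for each $p \in P_1$; each $D_p$ is dense open in $P_2$, there are fewer than $\mu$ of them, and their intersection (dense by ${<}\mu$-distributivity) is forced to lie inside $\dot D$. Your nice-name coding argument via $V^{P_2}$ is correct as well and is closer in spirit to your treatment of the first part; the paper's route is somewhat more elementary in that it avoids the bookkeeping of packaging all the antichain data into a single function and stays entirely within the dense-open formulation of distributivity.
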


\begin{proof}  Recall that a poset $P$ is ${<}\mu$-distributive if
  forcing with it does not add, for any $\gamma<\mu$, 
 any new $\gamma$-sequences of
  ordinals. Since $P_2$ is ${<}\mu$-closed, forcing with $P_2$ does not
  add any new antichains to $P_1$. Therefore it follows that
forcing with $P_2$ preserves that $ P_1$ has the $\mu$-cc
and that for every $\gamma<\mu$, each $\gamma$-sequence of ordinals 
in the forcing extension by $P_2\times P_1$ is really just a
$P_1$-name. Since forcing with $P_1\times P_2$ is the same as
  $P_2\times P_1$, this shows that in the extension by $P_1$,
 there are no new $P_2$-names of $\gamma$-sequences of ordinals.

Now suppose that $P_2$ is $\mu$-distributive and that $P_1$ has
cardinality less than $\mu$. Let $\dot D$ be a $P_1$-name of a dense
open subset  of $P_2$. For each $p\in P_1$, let $D_p\subset P_2$ be the
set of all $q$ such that some extension of $p$ forces that $q\in \dot
D$. Since $p$ forces that $\dot D$ is dense and that $\dot D\subset
D_p$, it follows that $D_p$  is dense (and open). Since $P_2$ is
$\mu$-distributive, $\bigcap_{p\in P_1} D_p$ is dense and  is clearly 
going to be a subset of $\dot D$. Repeating this argument for at most
$\mu$ many $P_1$-names of dense open subsets of $P_2$ completes the
proof. 
\end{proof}

We recall the definition of Easton supported product of posets (see
\cite[p233]{Jech}).

\begin{definition} If $\Lambda$ is a set of cardinals and $\{
  P_\lambda : \lambda\in \Lambda\}$ is a set of posets, then we will
  use $\Pi_{\lambda\in \Lambda} P_\lambda$ to denote the collection of
  partial functions $p$ such that 
\begin{enumerate}
\item $\dom(p)\subset \Lambda$,
\item $|\dom(p)\cap \mu| < \mu$ for all
  regular cardinals $\mu$,
\item $p(\lambda)\in P_\lambda$ for all $\lambda\in \dom(p)$.
\end{enumerate}
This collection is a poset when ordered by $q<p$ if $\dom(q)\supset
\dom(p)$ and $q(\lambda)\leq p(\lambda)$ for all $\lambda\in
\dom(p)$. 
\end{definition}

\begin{lemma}
For\label{chain.condition}
 each cardinal $\mu$, $\Pi_{\lambda\in \Lambda\setminus \mu^+}
T_{\lambda}$  is ${<}\mu^+$-closed and, if $\mu$ is regular,
$\Pi_{\lambda\in \Lambda\cap \mu} T_\lambda$ has cardinality at most
$2^{<\mu} \leq \min(\Lambda\setminus\mu)$.
\end{lemma}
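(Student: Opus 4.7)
The plan is to establish the two assertions separately. Both are direct computations using Easton support, the assumed ${<}\lambda$-closure of $T_\lambda$, and GCH cardinal arithmetic; no subtle intermediate claims are required.

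For the ${<}\mu^+$-closure of $\Pi_{\lambda\in\Lambda\setminus\mu^+} T_\lambda$, I would take a directed family $\{p_\alpha:\alpha<\kappa\}$ with $\kappa\leq\mu$ and construct a lower bound $p$ by setting $\dom(p)=\bigcup_\alpha\dom(p_\alpha)$ and, at each coordinate $\lambda\in\dom(p)$, letting $p(\lambda)$ be a lower bound in $T_\lambda$ of the directed subfamily $\{p_\alpha(\lambda):\lambda\in\dom(p_\alpha)\}$. Such a lower bound exists because $\lambda\geq\mu^+$, so the ${<}\lambda$-closure of $T_\lambda$ handles families of size at most $\mu$. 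Easton support for $p$ is then checked on a test regular cardinal $\nu$: if $\nu\leq\mu^+$ then $\dom(p)\cap\nu=\emptyset$ since every coordinate is at least $\mu^+$; and if $\nu>\mu^+$ then $\dom(p)\cap\nu$ is a union of at most $\mu<\nu$ sets each of cardinality $<\nu$, so by the regularity of $\nu$ this union has cardinality $<\nu$.

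For the cardinality bound with $\mu$ regular, first observe that any $p\in\Pi_{\lambda\in\Lambda\cap\mu}T_\lambda$ has $\dom(p)\subseteq\Lambda\cap\mu\subseteq\mu$ and, taking $\nu=\mu$ in the Easton condition, $|\dom(p)|<\mu$. Since each well-pruned $\lambda^+$-Souslin tree has $\lambda^+$ levels, each an antichain of size $\leq\lambda$, its total cardinality is $\lambda^+\leq\mu$, so the number of conditions on a fixed domain $d$ is at most $\mu^{|d|}\leq\mu^{<\mu}$, while the number of admissible domains is at most $\mu^{<\mu}$. The total count is therefore $\mu^{<\mu}$. Under GCH with $\mu$ regular uncountable one has $\mu^{<\mu}=2^{<\mu}=\sup_{\kappa<\mu}\kappa^+=\mu$, and since every element of $\Lambda\setminus\mu$ is at least $\mu$, we conclude $2^{<\mu}\leq\min(\Lambda\setminus\mu)$.

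The only moderately delicate step is the Easton support verification in the closure argument, which leans on the regularity of the test cardinal $\nu$ to absorb a $\mu$-indexed union of small sets. The choice of $\Lambda\setminus\mu^+$ rather than $\Lambda\setminus\mu$ on the index side is precisely what keeps the coordinate-wise ${<}\lambda$-closure strong enough: only with $\lambda\geq\mu^+$ does ${<}\lambda$-closure imply ${<}\mu^+$-closure. The rest is routine counting under GCH.
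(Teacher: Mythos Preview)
The paper states this lemma without proof, treating it as a routine consequence of the Easton-support definition together with the ${<}\lambda$-closure of each $T_\lambda$ and GCH. Your proposal correctly supplies exactly those routine details: the coordinatewise construction of a lower bound (using $\lambda\geq\mu^+$ so that ${<}\lambda$-closure covers directed families of size $\leq\mu$), the Easton-support check via regularity of the test cardinal, and the GCH counting $\mu^{<\mu}=2^{<\mu}=\mu\leq\min(\Lambda\setminus\mu)$. Nothing is missing; your argument is precisely the verification the paper leaves to the reader.
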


\begin{lemma} If\label{chains}
 $P$ is ccc and $G\subset P\times \Pi_{\lambda\in
    \Lambda} T_\lambda$ is generic, then in $V[G]$, for any $\mu$
    and any family $\mathcal A\subset [\Naturals]^\omega$ with
    $|\mathcal A|=\mu $:
\begin{enumerate}
\item  if $\mu\leq\omega$, then $\mathcal A$ is a member of $V[G\cap
  P]$; 
\item if $\mu=\lambda^+, \lambda\in \Lambda$, 
then there is an $\mathcal A'\subset
  \mathcal A$ of cardinality $\lambda^+ $ such that $\mathcal A'$ is a
  member of $V[G\cap (P\times T_\lambda)]$;
\item if $\mu^- \notin \Lambda$, then there is an $\mathcal A'\subset
  \mathcal A$ of cardinality $\mu $ which is a member of $V[G\cap
  P]$. 
\end{enumerate}
\end{lemma}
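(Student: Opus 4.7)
The approach is to factor $R = P \times \prod_{\mu \in \Lambda} T_\mu$ as a product $R_1 \times R_2$ where $R_1$ (the ``front'') contains $P$ and whichever low trees are relevant and $R_2$ (the ``tail'') consists of the high trees. Lemma~\ref{chain.condition} tells us $R_2$ is sufficiently closed and the small-tree factor of $R_1$ has bounded cardinality; Lemma~\ref{distrib} converts this into distributivity of $R_2$ over $V[G \cap R_1]$, trapping each member of $\mathcal{A}$ inside $V[G \cap R_1]$. The residual step is then to extract the claimed sub-family of $\mathcal{A}$ inside the target smaller model.

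For (1), since every $\mu \in \Lambda$ is uncountable and each $T_\mu$ is $<\mu$-closed, the Easton product $\prod_{\mu \in \Lambda} T_\mu$ is $<\omega_1$-closed. Applying Lemma~\ref{distrib} with $P$ (ccc) gives that this product is $<\omega_1$-distributive over $V[G \cap P]$. A countable family of subsets of $\Naturals$ codes as a single $\omega$-sequence of ordinals, so no new such family can be added and $\mathcal{A} \in V[G \cap P]$. For (2), take $R_1 = P \times T_\lambda \times \prod_{\mu \in \Lambda \cap \lambda} T_\mu$ and $R_2 = \prod_{\mu \in \Lambda \setminus \lambda^+} T_\mu$. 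By Lemma~\ref{chain.condition}, $R_2$ is $<\lambda^+$-closed and $\prod_{\mu \in \Lambda \cap \lambda} T_\mu$ has cardinality at most $\lambda$. Because $P$ is ccc and $T_\lambda$ is $<\lambda$-closed with $\lambda \geq \omega_1$, the product $P \times T_\lambda$ is ccc; multiplying by a poset of size at most $\lambda$ preserves $\lambda^+$-cc, so $R_1$ is $\lambda^+$-cc. Lemma~\ref{distrib} then gives that $R_2$ is $<\lambda^+$-distributive over $V[G \cap R_1]$, placing each $a_\alpha$ in $V[G \cap R_1]$.

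For (3), the parallel factoring puts $Q_{<\mu} = \prod_{\lambda \in \Lambda \cap \mu} T_\lambda$ and $Q_{\geq \mu} = \prod_{\lambda \in \Lambda \setminus \mu} T_\lambda$. The assumption $\mu^- \notin \Lambda$ forces $\Lambda \cap [\mu^-,\mu) = \emptyset$, so $Q_{<\mu}$ coincides with $\prod_{\lambda \in \Lambda,\, \lambda < \mu^-} T_\lambda$, of cardinality $\leq 2^{<\mu^-} = \mu^- < \mu$ by Lemma~\ref{chain.condition} and GCH. The tail $Q_{\geq \mu}$ is $<\mu$-closed and $P \times Q_{<\mu}$ is $\mu$-cc, so Lemma~\ref{distrib} again traps each $a_\alpha$ in $V[G \cap (P \times Q_{<\mu})]$.

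The principal technical obstacle in both (2) and (3) is the \emph{extraction} step: having shown that every $a_\alpha$ lies in the intermediate model, one must produce a set $\mathcal{A}' \subseteq \mathcal{A}$ of the claimed cardinality that is itself an element of the smaller target model ($V[G \cap (P \times T_\lambda)]$ in (2), or $V[G \cap P]$ in (3)). The plan is to use the size bound on the residual forcing (namely $\leq \lambda$ in (2), and $< \mu$ in (3)), which limits how many genuinely new subsets of $\Naturals$ that forcing can contribute over the target smaller model; by nice-name counting this forces the required number of $a_\alpha$ to lie already in the smaller model. A name for $\mathcal{A}'$ in the target model can then be assembled from a fixed nice $R$-name $\dot{\mathcal{A}}$ for $\mathcal{A}$ by collecting, in $V$, those names $\sigma$ of subsets of $\Naturals$ that are forced by appropriate conditions to appear in $\dot{\mathcal{A}}$; verifying that this yields a genuine subset of $\mathcal{A}$ of the right size while living in the correct intermediate model is where the bookkeeping concentrates.
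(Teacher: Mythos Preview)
The paper states this lemma without proof, so there is no argument to compare against directly; your Easton-style factoring is the intended route, and part~(1) is fine.  There are, however, a definite error and a genuine gap in (2) and (3).

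\textbf{The error.}  You assert that $P\times T_\lambda$ is ccc.  It is not: $T_\lambda$ is a $\lambda^+$-Souslin tree with $\lambda\geq\omega_1$, and its levels are antichains of size up to~$\lambda$, so $T_\lambda$ (and hence the product) fails the ccc.  What you actually need is only that $R_1=P\times T_\lambda\times Q_{<\lambda}$ is $\lambda^+$-cc, and this does hold, but by a different route: $T_\lambda\times Q_{<\lambda}$ is $\lambda^+$-cc (Souslin times a poset of size $\le\lambda$) and ${<}\omega_1$-closed, so forcing with it keeps $P$ ccc; reading the product as the iteration $(T_\lambda\times Q_{<\lambda})*\check P$ then gives $\lambda^+$-cc.

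\textbf{The gap.}  The extraction step is where the content lives, and your sketch is not a proof.  By part~(1) every $a_\alpha$ already lies in $V[G\cap P]$, so ``nice-name counting'' of reals is beside the point; the task is to produce the \emph{set} $\mathcal A'$ in the target model.  Collecting all $a$ that are forced into $\dot{\mathcal A}$ by \emph{some} condition yields a superset of $\mathcal A$, not a subset --- you must restrict to a single condition that lies in $G$.  Concretely (for (3) with $\mu$ regular): first sharpen the distributivity by peeling off the least tree in the tail, so that the remaining tail is ${<}\mu^+$-closed over a $\mu^+$-cc front and the entire family $\mathcal A$ (not merely its members) lands in an intermediate model whose residual over $V[G\cap P]$ is $Q_{<\mu}$ times at most one Souslin tree $T_\mu$.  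A pigeonhole on the $Q_{<\mu}$-coordinate (size ${<}\mu$) followed by a level-bound on the tree coordinate (the generic branch through $T_\mu$ has regular length $\mu^+>\mu$) then yields a single condition $s^*\in G$ for which $\mathcal A'=\{a: s^*\Vdash \check a\in\dot{\mathcal A}\}$ has size~$\mu$; this $\mathcal A'$ is definable in $V[G\cap P]$ from $s^*$ and $\dot{\mathcal A}$, and is contained in $\mathcal A$ precisely because $s^*\in G$.  The argument for (2) is parallel, carried out over $V[G\cap(P\times T_\lambda)]$ with $Q_{<\lambda}$ (and possibly $T_{\lambda^+}$) as the small residual factor.
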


\begin{corollary}
If\label{omit}
 $P$ is ccc and $G\subset P\times \Pi_{\lambda\in \Lambda}
T_\lambda$ is generic, then for any $\kappa \leq\mu  < \mathfrak c$
such that either $\kappa \neq \mu $ or 
$\kappa \notin \{\lambda^+ : \lambda\in \Lambda\}$, 
if there is a tie-set of $\mathfrak b\delta$-type $(\kappa,\mu)$ in
$V[G]$, then there is such a tie-set in $V[G\cap P]$.
\end{corollary}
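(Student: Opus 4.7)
The plan is to adapt the proof strategy of Proposition \ref{tie-sets} (specifically the sub-case where $\kappa_1,\kappa_2 \neq \lambda^+$, in which a single Souslin-tree condition decides cofinal families of both ideals) to the Easton product setting, using Lemma \ref{chains} to locate suitable subfamilies in small-enough submodels. Suppose $V[G]$ contains a tie-set $K$ witnessed by $C,D$ of $\mathfrak b\delta$-type $(\kappa,\mu)$ satisfying the hypothesis, and let $G_0 := G\cap P$ and $G_1$ the projection of $G$ to $\Pi_{\lambda\in\Lambda} T_\lambda$. Working in $V[G]$, fix $\subset^*$-increasing cofinal towers $\mathcal C_0 = \{c_\alpha : \alpha<\kappa\}\subset \mathcal I_C$ and $\mathcal D_0 = \{d_\beta : \beta<\mu\}\subset \mathcal I_D$; these exist because $\mathfrak b(\mathcal I_C)=\delta(\mathcal I_C)=\kappa$ with $\kappa$ regular (similarly for $\mu$). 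Apply Lemma \ref{chains} to each to obtain subfamilies $\mathcal C_0'\subseteq \mathcal C_0$ and $\mathcal D_0'\subseteq \mathcal D_0$ of the same cardinality, each lying either in $V[G_0]$ (when the invariant is not in $\{\lambda^+:\lambda\in\Lambda\}$) or in $V[G_0\times (G_1\cap T_\lambda)]$ (when it equals such a $\lambda^+$). By regularity of $\kappa,\mu$, the index sets of the subfamilies are cofinal in $\kappa$ and $\mu$, so $\mathcal C_0',\mathcal D_0'$ remain cofinal in $\mathcal I_C,\mathcal I_D$.

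For a subfamily that sits only in $V[G_0\times (G_1\cap T_{\lambda_1})]$, view it through its $T_{\lambda_1}$-name $\dot{\mathcal C_0'}$ over $V[G_0]$. Two facts about $T_{\lambda_1}$ are crucial here: (i) its $\lambda_1^+$-cc is preserved over $V[G_0]$, because incompatibility in $T_{\lambda_1}$ is absolute and an antichain in $V[G_0]$ is still an antichain in $V$ (the ccc of $P$ ensuring no new size-$\lambda_1^+$ antichains appear); and (ii) by Lemma \ref{distrib}, $T_{\lambda_1}$ remains $<\omega_1$-distributive, hence adds no subsets of $\Naturals$, over $V[G_0]$. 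Consequently each name $\dot c'_\alpha$ for an element of $V[G_0]\cap[\Naturals]^\omega$ is determined by an antichain in $T_{\lambda_1}$ of size $\leq\lambda_1$ whose forced values comprise a $V[G_0]$-set of size $\leq\lambda_1$. Let $\mathcal C_0^{(0)}$ be the union over $\alpha<\kappa$ of these possible-value sets; it lies in $V[G_0]$, has cardinality $\leq\kappa$, and contains $\mathcal C_0'$. The hypothesis $\kappa\neq\mu$ or $\kappa\notin\{\lambda^+:\lambda\in\Lambda\}$ excludes the single bad case $\kappa=\mu=\lambda^+$; in each remaining case either at least one of $\mathcal C_0',\mathcal D_0'$ is in $V[G_0]$ outright, and the perp identification $\mathcal I_D=\mathcal I_C^\perp$ (absolute since $\mathcal P(\Naturals)$ agrees in $V[G]$ and $V[G_0]$) pins down the other ideal, or the two subfamilies use distinct trees $T_{\lambda_1}\neq T_{\lambda_2}$ and their lifted possible-value families are combined using mutual genericity of the factor forcings.

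Once $\mathcal I_C,\mathcal I_D$ are recognized as members of $V[G_0]$ with cofinal $V[G_0]$-families $\mathcal C_0^{(0)},\mathcal D_0^{(0)}$ of sizes $\leq\kappa,\mu$, construct the tie-set in $V[G_0]$ as $A^{(0)}:=\overline{\bigcup\{c^*:c\in\mathcal I_C\}}$, $B^{(0)}:=\overline{\bigcup\{d^*:d\in\mathcal I_D\}}$, and $K^{(0)}:=A^{(0)}\cap B^{(0)}$ in $\Nstar^{V[G_0]}$. Verify $A^{(0)}\cup B^{(0)}=\Nstar^{V[G_0]}$: any ultrafilter $p\notin A^{(0)}$ has a neighborhood $a^*$ with $a\in\mathcal I_C^\perp=\mathcal I_D$, so $p\in B^{(0)}$. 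Perpendicularity gives $\bigcup\{c^*:c\in\mathcal I_C\}\cap B^{(0)}=\emptyset$, so $A^{(0)}\setminus K^{(0)}$ is dense in $A^{(0)}$ and symmetrically, making $K^{(0)}$ a tie-set. Its $\mathfrak b\delta$-type in $V[G_0]$ is $(\kappa,\mu)$: at most by the cofinal $V[G_0]$-families, and at least because a smaller witness in $V[G_0]$ would persist to $V[G]$ and contradict the hypothesized type. The main obstacle is the sub-case $\kappa=\lambda_1^+,\mu=\lambda_2^+$ with $\lambda_1\neq\lambda_2$: here both possible-value liftings, over independent Souslin trees, must be reconciled with the perpendicularity $\mathcal I_D=\mathcal I_C^\perp$ in $V[G_0]$, and the mutual genericity of $T_{\lambda_1}$ and $T_{\lambda_2}$ in the Easton product is the essential ingredient for coherence.
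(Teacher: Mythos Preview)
Your overall strategy matches the paper's: use Lemma~\ref{chains} to reduce each cofinal chain to a $(P\times T_{\lambda_i})$-name, then in $V[G\cap P]$ collect the ``possible values'' of these names and argue they witness a tie-set of the right type. The gap is in how you form $\mathcal C_0^{(0)}$ and $\mathcal D_0^{(0)}$. You take \emph{all} values forced by \emph{any} node of $T_{\lambda_1}$ (resp.\ $T_{\lambda_2}$), but a value $c$ forced by a node $t$ incompatible with the actual generic branch need not lie in $\mathcal I_C$ at all. The perpendicularity you need---that every $c\in\mathcal C_0^{(0)}$ is almost disjoint from every $d\in\mathcal D_0^{(0)}$---only follows if the witnessing nodes $t,s$ can be extended to a single condition in the full product that \emph{also} forces ``$(\mathcal J_A)^\downarrow$ is dense in $\mathcal I_A$'', ``$(\mathcal J_B)^\downarrow$ is dense in $\mathcal I_B$'', and ``$\mathcal I_B=\mathcal I_A^\perp$''. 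Your appeal to ``mutual genericity'' gives the compatibility of $t$ with $s$ (since $\lambda_1\neq\lambda_2$), but not their compatibility with such a condition, so as written the argument does not close.

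The paper fills exactly this gap by first fixing a condition $q\in G$ forcing those three statements, and then defining $\mathcal A$ (your $\mathcal C_0^{(0)}$) as only those $a$ for which some $t$ extending $q(\mu_1)$ forces $a=\dot a_\alpha$; similarly for $\mathcal B$. Then for any $(a,b)\in\mathcal A\times\mathcal B$ one explicitly builds $\bar q\leq q$ by setting $\bar q(\mu_1)=t$, $\bar q(\lambda_1)=s$; this $\bar q$ forces $a\in\mathcal J_A$ and $b\in\mathcal J_B$, hence $a\cap b=^*\emptyset$, which is absolute. That explicit condition-building is the real content of your ``mutual genericity'' sentence. Note also that the paper does not try to show $\mathcal I_C,\mathcal I_D\in V[G\cap P]$, as you do; it simply constructs a (possibly different) tie-set of the required $\mathfrak b\delta$-type directly from the restricted families $\mathcal A,\mathcal B$, which is all the corollary asks for and sidesteps the issue of whether the unrestricted possible-value sets sit inside the original ideals.
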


\begin{proof} Assume that $\betan = A \tie{K} B$ in $V[G]$ with $\mu=
  \mathfrak b(A)$  and $\lambda=\mathfrak b(B)$. Let $\mathcal
  J_A\subset\mathcal I_A$ be an increasing mod finite chain, of order type
  $\mu$, which is
  dense in $\mathcal I_A$. Similarly let $\mathcal J_B\subset\mathcal
  I_B$ be such a chain of order type $\lambda$. By Lemma \ref{chains},
  $\mathcal J_A$ and $\mathcal J_B$ are subsets of
  $[\Naturals]^\omega\cap V[G\cap P] = [\Naturals]^\omega$. 
Choose, if possible $\mu_1\in \Lambda$ such that $\mu_1^+=\mu$ and
$\lambda_1\in \Lambda$ such that $\lambda_1^+=\lambda$. 
Also by
  Lemma \ref{chains}, we can, by passing to a subcollection, assume
  that $\mathcal J_A\in V[G\cap (P\times T_{\mu_1})]$ (if there is no
  $\mu_1$, then let $T_{\mu_1}$ denote the trivial order). Similarly,
  we may assume that $\mathcal J_B\in V[G\cap (P\times
  T_{\lambda_1})]$. Fix a condition $q\in G \subset (P\times
  \Pi_{\lambda\in \Lambda} T_\lambda)$ which forces that
$(\mathcal J_A)^\downarrow$ is a $\subset$-dense subset of $\mathcal
I_A$,  that $(\mathcal J_B)^\downarrow$ is a $\subset$-dense subset
of $\mathcal I_B$, and that $(\mathcal I_A)^\perp = \mathcal I_B$. 

Working in the model $V[G\cap P]$ then, there is a
  family $\{ \dot a_\alpha : \alpha\in \mu\}$ of $T_{\mu_1}$-names for
  the members of $\mathcal J_A$; and a family $\{\dot b_\beta :
  \beta\in \lambda_1\}$ of $T_{\lambda_1}$-names for the members of
  $\mathcal J_B$. Of course if $\mu=\lambda$ and $T_{\mu_1}$ is the
  trivial order, then $\mathcal J_A$ and $\mathcal J_B$ are already in
  $V[G\cap P]$ and we have our tie-set in $V[G\cap P]$. 

Otherwise, we assume that $\mu_1<\lambda_1$. 
Set $\mathcal A$ to be the set of all $a\subset \Naturals$ such that
there is some $q(\mu_1)\leq t\in T_{\mu_1}$ and $\alpha \in \mu$ 
such that $t\forces{T_{\mu_1}}{ a=\dot a_\alpha}$. 
Similarly let $\mathcal B$ be
the set of all $b\subset \Naturals$ such that there is some
$q(\lambda_1)\leq s\in
T_{\lambda_1}$ and $\beta\in \lambda$ such that
$s\forces{T_{\lambda_1}}{ b=\dot 
b_\beta}$. It follows from the construction
that, in $V[G]$, for any $(a',b')\in \mathcal J_A\times \mathcal J_B$, 
there is an $(a,b)\in \mathcal A\times \mathcal B$ such that
$a'\subset^* a$ and $b'\subset^* b$. Therefore the ideal generated by
$\mathcal A \cup \mathcal B$ is certainly dense. It remains only to
show that $\mathcal B\subset (\mathcal A)^\perp$. 
Consider  any $(a,b)\in
\mathcal A\times \mathcal B$, and choose $(q(\mu_1),q(\lambda_1))\leq
(t,s)\in T_{\mu_1}\times T_{\lambda_1} $ such that 
$t\forces{T_{\mu_1}}{ a\in \mathcal J_A}$ and
$s\forces{T_{\lambda_1}}{b\in \mathcal J_B}$. It follows that for any
condition $\bar q\leq q$ with $\bar q\in 
(P\times \Pi_{\lambda\in  \Lambda} T_\lambda)
$,  $\bar q(\mu_1)=t$, $\bar q(\lambda_1)=s$, we
have that 
$$
\bar q\forces
{(P\times \Pi_{\lambda\in  \Lambda} T_\lambda)}{
a\in \mathcal J_A\ \mbox{and}\ 
 b\in \mathcal J_B}\ .$$  
It is routine now to check that, in $V[G\cap P]$,
 $\mathcal A$ and $\mathcal B$
generate ideals that witness that $\bigcap \{ (\Naturals \setminus
(a\cup b))^* : (a,b)\in \mathcal A\times \mathcal B\}$ is a tie-set of
$\mathfrak b\delta$-type $(\mu,\lambda)$. 
\end{proof}

Let $T$ be the rooted tree $\{\emptyset\}\cup \bigcup_{\lambda\in 
  \Lambda} T_\lambda$  and we will force an embedding of $T$ into
$\mathcal P(\Naturals)$ mod finite. In fact, we force a structure
 $\{ (a_t, x_t, b_t) : t\in T\}$ satisfying the conditions (1)-(4)
of Proposition \ref{tie-sets}.

\begin{definition} The\label{def.Q0}
 poset $Q_0$ is defined as the set of elements
  $q = (n^q, T^q, f^q)$ where $n^q\in \Naturals$, $T^q\in
  [T]^{<\omega}$, and $f^q : n^q\times T^q \rightarrow \{0,1,2\}$. The
  idea is that $x_t$ will be $\bigcup_{q\in G} \{ j\in n^q :
  f^q(j,t)=0\}$, $a_t$ will be $\bigcup_{q\in G} \{ j\in n^q :
  f^q(j,t)=1\}$ and $b_t = \Naturals\setminus (a_t\cup x_t)$. We set
  $q<p$ if $n^q\geq n^p$, $T^q\supset T^p$, $f^q\supset f^p$ and for
  $t,s\in T^p$ and $i\in [n^p,n^q)$
\begin{enumerate}
\item if $t<s$ and $f^q(i,t)\in \{1,2\}$, then $f^q(i,s)=f^q(i,t)$;
\item if $t<s$ and $f^q(i,s)=0$, then $f^q(i,t)=0$;
\item if $t\perp s$, then $f^q(i,t)+f^q(i,s)>0$.
\item if $j\in\{1,2\}$ and$\{
t^\frown \ell,t^\frown (\ell{+}j)\}\subset T^p$ 
and $f^q(i,t^\frown ({\ell+j}))=0$, then $f^q(i,t^\frown\ell)=j$.
\end{enumerate}
\end{definition}

The next lemma is very routine but we record it for reference.

\begin{lemma} The poset $Q_0$ \label{Q0} is ccc and if $G\subset Q_0$
  is generic, the family $\mathcal X_T =
\{ (a_t, x_t, b_t) : t\in T\}$ satisfies the
  conditions of Proposition \ref{tie-sets}.
\end{lemma}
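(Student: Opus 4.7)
The plan has two parts: first show that $Q_0$ satisfies the countable chain condition, then verify that the generic family $\mathcal{X}_T$ satisfies the structural conditions (1)--(4) of Proposition \ref{tie-sets}.

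\textbf{Chain condition.} I will apply a standard $\Delta$-system argument. Given an uncountable set of conditions $\{q_\alpha : \alpha\in \omega_1\}$, first pigeonhole to make $n^{q_\alpha}$ constantly equal to some $n$. Apply the $\Delta$-system lemma to the finite sets $\{T^{q_\alpha}\}$ to extract an uncountable subfamily whose $T^{q_\alpha}$'s form a $\Delta$-system with root $R\in [T]^{<\omega}$. Since $f^{q_\alpha}\restriction (n\times R)$ ranges over a finite set (there are only $3^{n\cdot |R|}$ possibilities), a further pigeonhole yields uncountably many $\alpha$ with this restriction equal to a common $g$. Any two conditions $p,q$ surviving these reductions are compatible: set $r=(n, T^p\cup T^q, f^p\cup f^q)$. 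The union is a well-defined function on $n\times(T^p\cup T^q)$ because the two halves agree on $n\times R$, and $r\le p, r\le q$ since $n^r = n^p = n^q$ forces the interval $[n^p, n^r)$ to be empty, making all four ordering clauses of Definition \ref{def.Q0} vacuously satisfied.

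\textbf{Verifying the generic structure.} Let $G\subset Q_0$ be generic, and define $x_t, a_t, b_t$ from $G$ by the formulas of Definition \ref{def.Q0}. The basic density facts are that for each $t\in T$, the set $\{q : t\in T^q\}$ is dense (extend $T^q$ by $\{t\}$ with value $1$ at every existing coordinate; since $n^r = n^q$, no ordering clause is triggered), and for each $n$, the set $\{q : n^q > n\}$ is dense (extend $n^q$ by one, assigning value $1$ at the new coordinate for every $t\in T^q$, which satisfies clauses (1)--(4) trivially because the constant value $1$ makes their hypotheses false). Together with the compatibility of elements of $G$, this gives (1): for every $(j,t)\in \Naturals\times T$ the value $f(j,t)$ is uniquely determined and lies in $\{0,1,2\}$, so $\{a_t, x_t, b_t\}$ partitions $\Naturals$.

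For the remaining conditions the scheme is uniform: given the nodes of interest, pick $p\in G$ with all of them in $T^p$, and read off the desired inclusion from the corresponding clause of the $Q_0$-ordering applied at each $i\ge n^p$. Thus clause (1) of the order yields $a_t\setminus a_s, b_t\setminus b_s\subset n^p$ when $t<s$; clause (2) yields $x_s\setminus x_t\subset n^p$; clause (3) (applied to siblings, which are incomparable) yields $x_{t^\frown j}\cap x_{t^\frown\ell}\subset n^p$ for $j<\ell$; and clause (4) yields $x_{t^\frown(\ell+1)}\setminus a_{t^\frown\ell}$ and $x_{t^\frown(\ell+2)}\setminus b_{t^\frown\ell}$ both contained in $n^p$. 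Since conditions (3) and (4) of Proposition \ref{tie-sets} are explicitly $\subset^*$ statements, these finite errors are harmless; condition (2), though stated as exact disjointness, is used only through $x_{t^\frown j}^*\cap x_{t^\frown\ell}^*=\emptyset$ in $\Nstar$, which is equivalent to mod-finite disjointness.

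\textbf{Main obstacle.} There is no real obstacle; the proof is essentially a bookkeeping verification. The only subtlety worth flagging is that the ordering clauses of $Q_0$ constrain new coordinates only relative to already-present tree nodes, so a node first admitted at stage $p$ carries finitely many ``uncontrolled'' values on the coordinates below $n^p$. This finite slack is invisible to Proposition \ref{tie-sets}, whose hypotheses are all stable under finite modification.
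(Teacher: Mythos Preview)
Your argument is correct, and since the paper omits the proof entirely (prefacing the lemma with ``very routine but we record it for reference''), your approach is simply the natural one: a $\Delta$-system reduction for ccc, and reading each $\subset^*$ relation directly off the corresponding clause in the ordering on $Q_0$. Your observation that clause~(2) of Proposition~\ref{tie-sets} is obtained only mod finite from $Q_0$, and that this suffices for the application in $\Nstar$, is a genuine point the paper glosses over.

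One small omission: the hypothesis of Proposition~\ref{tie-sets} also requires each triple to lie in $([\Naturals]^\omega)^3$, i.e.\ that $x_t,a_t,b_t$ are all infinite, and you do not verify this. Your constant-value-$1$ extension shows $a_t$ is infinite, and the identical argument with constant value $2$ handles $b_t$. For $x_t$ one cannot use a constant assignment: to place a new $i$ in $x_t$ one must set value $0$ along the branch through $t$ in $T^p$, values in $\{1,2\}$ off the branch, and at each branching node respect clause~(4) by assigning the specific value $j\in\{1,2\}$ to the sibling $t'^\frown\ell$ when $t'^\frown(\ell+j)$ is the on-branch child. This is still routine, but it is not the one-line density you describe, and it is the only place where the combinatorics of the ordering on $Q_0$ need to be unpacked.
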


We will need some other combinatorial properties of the family
$\mathcal X_T$. 

\begin{definition} For any
$\tilde T\in [T]^{<\omega}$, we define the following ($Q_0$-names). 
\begin{enumerate} 
\item for $i\in \Naturals$, $[i]_{\tilde T} = \{ j\in \Naturals : 
 (\forall t\in \tilde T)~~ i\in x_t \mbox{\ iff\ } j\in x_t \}$,
\item the collection $\fin(\tilde T)$ is the set of $ [i]_{\tilde T} $
  which are finite.
\end{enumerate}
We abuse notation and let $\fin(\tilde T)\subset n$ abbreviate
$\fin(\tilde T)\subset \mathcal P(n)$.
\end{definition}

\begin{lemma} For\label{fin}
 each $q\in Q_0$ and each $\tilde T\subset T^q$, 
$\fin(\tilde T)\subset n^q$ and for $i\geq n_q$, $[i]_{\tilde T}$ is
infinite.  
\end{lemma}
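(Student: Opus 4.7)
The two claims are logically equivalent: saying that every finite equivalence class is forced into $n^q$ is the same as saying that any class containing some $i \geq n^q$ is forced to be infinite. So it suffices to fix $i \geq n^q$ and show, for every $k \in \omega$, that the set of conditions forcing $|[i]_{\tilde T}| \geq k$ is dense below $q$; this will yield $q \forces{Q_0}{[i]_{\tilde T} \text{ is infinite}}$ by genericity.

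Let $q' \leq q$ be arbitrary and, by first extending in the $n$-coordinate if necessary (extensions are always available), assume $n^{q'} > i$. Then the row $\sigma := f^{q'}(i, \cdot) : T^{q'} \to \{0,1,2\}$ is defined, and since $q' \in Q_0$ it satisfies clauses (1)--(4) of Definition \ref{def.Q0} at row $i$. Build $q''$ by setting $T^{q''} := T^{q'}$, $n^{q''} := n^{q'} + k$, keeping $f^{q''} \restriction n^{q'} \times T^{q'} = f^{q'}$, and putting $f^{q''}(j,t) := \sigma(t)$ for $j \in [n^{q'}, n^{q''})$ and $t \in T^{q'}$. The key---and essentially the only---observation is that each of (1)--(4) is a purely row-local constraint: in every clause both the antecedent and the conclusion involve a single fixed row. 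So if $\sigma$ is valid at one row, it is valid at every row to which it is copied, and $q'' \in Q_0$ with $q'' \leq q'$. By construction, $j \in x_t \iff i \in x_t$ for all $j \in [n^{q'}, n^{q''})$ and $t \in \tilde T \subseteq T^{q'}$, so $q'' \forces{Q_0}{\{i\} \cup [n^{q'}, n^{q''}) \subseteq [i]_{\tilde T}}$, giving $|[i]_{\tilde T}| \geq k + 1$ and completing the density argument.

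The only point needing a second look is clause (4), which at first glance couples two distinct tree-nodes $t^\frown \ell$ and $t^\frown(\ell + j)$; but both the hypothesis $f(i, t^\frown(\ell + j)) = 0$ and the conclusion $f(i, t^\frown \ell) = j$ refer to the same row $i$, so copying a valid row preserves this clause verbatim as well. There is no serious obstacle; the lemma is a direct consequence of the homogeneity of the constraints defining $Q_0$.
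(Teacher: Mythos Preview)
Your argument has a gap at the sentence ``since $q' \in Q_0$ it satisfies clauses (1)--(4) of Definition~\ref{def.Q0} at row $i$.'' Membership in $Q_0$ places no constraint whatsoever on $f^{q'}$; clauses (1)--(4) belong to the definition of the \emph{ordering} $q < p$, and there they are imposed only for $t,s$ ranging over $T^p$ (the tree of the \emph{weaker} condition) and rows in $[n^p, n^q)$. So from $q' \leq q$ you learn that row $i$ obeys (1)--(4) on $T^q$, not on the possibly larger $T^{q'}$. Concretely: take $q$ with $n^q = 0$, $T^q = \{t\}$, and take $q' \leq q$ with $n^{q'} = 1$, $T^{q'} = \{t,s\}$ for some $s > t$, setting $f^{q'}(0,t) = 1$ and $f^{q'}(0,s) = 0$. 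The ordering clauses for $q' \leq q$ are vacuous (they quantify over pairs in $T^q = \{t\}$), yet row $0$ violates clause (1) on $T^{q'}$, so your copied row does not produce a condition $q'' \leq q'$. Your observation that (1)--(4) are row-local is correct and is the right idea; what fails is the premise that the particular row you copy is valid on all of $T^{q'}$.

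The repair is not hard once the issue is visible. From $q' \leq q$ and $i \in [n^q, n^{q'})$, clauses (2) and (3) applied on $T^q$ show that the zero-set $\{t \in T^q : f^{q'}(i,t) = 0\}$ is a downward-closed chain in $T^q$, hence either empty or of the form $\{t \in T^q : t \leq t^*\}$ for some $t^*$. Now build the new row $\sigma'$ on $T^{q'}$ from scratch rather than by copying: set $\sigma'(t) = 0$ exactly for $t \in T^{q'}$ with $t \leq t^*$, and assign values in $\{1,2\}$ to the remaining nodes so that (1) and (4) hold. Clause (4) forces a value only on nodes of the form $r^\frown\ell$ with $r^\frown(\ell{+}j) \leq t^*$; one checks each such node is minimal in $T^{q'}\setminus\sigma'^{-1}(0)$ and that these forced values are mutually consistent, so the assignment extends upward by (1) without conflict. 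Since $\tilde T \subset T^q$, the zero-pattern of $\sigma'$ on $\tilde T$ agrees with that of row $i$, which is exactly what you need to force the new integer into $[i]_{\tilde T}$. The paper records the lemma without proof; your density strategy is the intended one, but the verbatim-copy shortcut must be replaced by this reconstruction step.
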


\begin{definition} A sequence $\mathcal S_W = 
\{ (a_\xi, x_\xi, b_\xi) : \xi\in W\}$
  is a tower of $T$-splitters if for $\xi<\eta\in W$ and $t\in T$:
\begin{enumerate}
\item $\{ a_\xi,x_\xi,b_\xi\}$ is a partition of $\Naturals$,
\item $a_\xi\subset^* a_\eta$, $b_\xi\subset^* b_\eta$,
\item $x_t\cap x_\xi$ is infinite.
\end{enumerate}
\end{definition}

\begin{definition} If $\mathcal S_W$ is a tower of $T$-splitters and
  $Y$ is a subset $\Naturals$, then
  the poset $Q(\mathcal S_W, Y)$ is defined as follows. 
    Let $E_Y $ be the (possibly empty) set
  of minimal  elements of $T$ such that there is some finite $H\subset
  W$ such that 
$x_t\cap Y\cap \bigcap_{\xi\in H} x_\xi$ is
  finite. Let $D_Y = E_Y^\perp = \{ t \in T : (\forall s\in E_Y)~~
  t\perp s\}$. 
A condition $q\in Q(\mathcal S_W,Y)$ is a tuple $(n^q,a^q, x^q, b^q,
T^q, H^q) $ where
\begin{enumerate}
\item $n^q\in \Naturals$ and $\{a^q, x^q, b^q\}$ is a partition of
  $n^q$,
\item $T^q\in [T]^{<\omega}$ and $H^q\in [W]^{<\omega}$,
\item $(a_\xi\setminus a_\eta)$, $(b_\xi\setminus b_\eta)$, 
 and $(x_\eta\setminus x_\xi)$ are all
 contained in $n^q$ for $\xi<\eta\in H^q$.
\end{enumerate}
We define $q<p$ to mean $n^p\leq n^q$, $T^p\subset T^q$, $H^p\subset
H^q$,
and
\begin{enumerate}
\setcounter{enumi}{3}
\item for $t\in T^p\cap D_Y$, $x_t\cap (x^q\setminus x^p)\subset Y$,
\item $x^q\setminus x^p\subset \bigcap_{\xi\in H^p} x_\xi$,
\item $a^q\setminus a^p$ is disjoint from $b_{\max(H^p)}$,
\item $b^q\setminus b^p$ is disjoint from $a_{\max(H^p)}$.
\end{enumerate}
\end{definition}

\begin{lemma} If $W\subset \gamma$, $\mathcal S_W$ is a tower of
  $T$-splitters, and if 
$G$ is $Q(\mathcal S_W,Y)$-generic, then
$\mathcal S_W \cup \{(a_\gamma, x_\gamma, b_\gamma)\}$ is also a tower
of $T$-splitters where
  $a_\gamma =\bigcup\{ a_q : q\in G\}$, $x_\gamma=\bigcup\{ x_q: q\in
  G\}$, and $b_\gamma = \bigcup\{ b_q : q\in G\}$.  In addition, for
  each $t\in  D_Y$, $x_t\cap x_\xi\subset^* Y$ (and $x_t\cap
  x_\xi\subset^* \Naturals\setminus Y$ for $t\in E_Y$).
\end{lemma}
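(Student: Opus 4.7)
The plan is to verify, by density arguments against $Q(\mathcal{S}_W, Y)$, the three defining clauses of a tower of $T$-splitters for $\gamma$ together with the final $Y$-saturation claim (reading $x_\xi$ in the displayed conclusion as $x_\gamma$). That $\{a_\gamma, x_\gamma, b_\gamma\}$ partitions $\Naturals$ is immediate from each condition tuple partitioning $n^q$, once one notes that $\{q : n^q > n\}$ is dense for each $n$.

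To establish $a_\xi \subset^* a_\gamma$ for $\xi \in W$, I would show density of the set of $q$ with $\xi \in H^q$ such that every element of $a_\xi \cap [n^p, n^q)$ has been placed into $a^q$. Given $p$ with $\xi \in H^p$ and $n \geq n^p$ with $n \in a_\xi$, define $q$ by $n^q = n+1$, $a^q = a^p \cup \{n\}$, with $x^q, b^q, T^q, H^q$ unchanged. Extension clauses (4), (5), (7) are vacuous, and clause (6) reduces to $n \notin b_{\max H^p}$; but clause (3) in the definition of a condition forces $a_\xi \setminus a_{\max H^p} \subset n^p$, so $n \geq n^p$ yields $n \in a_{\max H^p}$, hence $n \notin b_{\max H^p}$ by the partition property. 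The inclusion $b_\xi \subset^* b_\gamma$ is symmetric.

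The heart of the argument is density of $\{q : |x_t \cap x^q| \geq k\}$ for each $t \in T$ and $k \in \omega$. The key observation is that the tower inequalities $a_\xi \subset^* a_\eta$, $b_\xi \subset^* b_\eta$ force $x_\eta \subset^* x_\xi$, so $\bigcap_{\xi \in H^p} x_\xi =^* x_{\max H^p}$; then tower clause (3) makes $x_t \cap \bigcap_{\xi \in H^p} x_\xi$ infinite. Given $p$, one wants $m \geq n^p$ in this intersection such that adding $m$ to $x^q$ respects extension clause (4). For $t \in D_Y$, one has $x_t \cap Y \cap \bigcap_{\xi \in H^p} x_\xi$ still infinite (otherwise some $s \leq t$ would land in $E_Y$, contradicting $t \in D_Y = E_Y^\perp$), so $m$ can be chosen there and clause (4) is automatic. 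For $t \in E_Y$, or more generally whenever $t \geq s_0$ for some $s_0 \in E_Y$, a short tree argument (any $t' \in D_Y$ comparable to $t$ would be comparable to $s_0$, contradicting $t' \perp s_0$) shows that every $t' \in T^p \cap D_Y$ is incomparable to $t$, whence $x_t \cap x_{t'} = \emptyset$ by clause (3) of Definition \ref{def.Q0}, and clause (4) is vacuous for any $m \in x_t$. The remaining case $t < s_0 \in E_Y$ is handled by choosing $m \in x_{t^\frown \ell}$ with $\ell$ avoiding the finitely many values $\ell_0$ for which some $t' \in T^p \cap D_Y$ extends $t^\frown \ell_0$; for any such $\ell$, $x_{t^\frown \ell}$ is disjoint from every $x_{t'}$ with $t' \in T^p \cap D_Y$, so clause (4) is again vacuous, and $x_{t^\frown \ell} \cap x_{\max H^p}$ is infinite by tower clause (3).

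For the $Y$-saturation, if $t \in D_Y$, take the first $p \in G$ with $t \in T^p$; extension clause (4) then yields $x_t \cap (x^q \setminus x^p) \subset Y$ for all $q \leq p$ in $G$, so $x_t \cap x_\gamma \subset^* Y$. If $t \in E_Y$, fix $H_0 \in [W]^{<\omega}$ witnessing finiteness of $x_t \cap Y \cap \bigcap_{\xi \in H_0} x_\xi$, use genericity to find $p \in G$ with $H_0 \subset H^p$, and apply clause (5) to conclude $x_\gamma \setminus x^p \subset \bigcap_{\xi \in H_0} x_\xi$, whence $x_t \cap x_\gamma \cap Y$ is finite. The main obstacle is the third paragraph: establishing $x_t \cap x_\gamma$ infinite for \emph{every} $t \in T$, not merely for $t \in D_Y \cup E_Y$. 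This rests entirely on the disjointness $x_{t_1} \cap x_{t_2} = \emptyset$ for incomparable tree nodes inherited from $Q_0$, combined with the finite-branching bookkeeping needed in the case $t < s_0 \in E_Y$.
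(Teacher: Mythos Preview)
The paper states this lemma without proof, so there is nothing to compare against directly; your density-based verification is the natural route and is essentially sound. Two points need tightening, however.

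First, in your argument for $a_\xi \subset^* a_\gamma$, the proposed extension ``$n^q = n+1$, $a^q = a^p \cup \{n\}$ with $x^q, b^q$ unchanged'' does not yield a partition of $n^q$ when $n > n^p$. The cleaner observation is that once $\xi \in H^p$, \emph{every} $q \le p$ with $n^q > n$ automatically has $n \in a^q$ for $n \in a_\xi \setminus n^p$: condition (3) in the definition of a condition gives $a_\xi \setminus a_{\max H^p} \subset n^p$, so $n \in a_{\max H^p}$; then extension clause (7) blocks $n \in b^q$, and clause (5) together with $n \notin x_\xi$ blocks $n \in x^q$. No explicit construction of $q$ is required.

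Second, and more substantively, your repeated assertion that $x_t \cap x_{t'} = \emptyset$ for incomparable $t, t'$ is too strong. Clause (3) of the $Q_0$ ordering only guarantees $i \notin x_t \cap x_{t'}$ for $i \ge n^r$, where $r$ is a $Q_0$-condition with $t, t' \in T^r$; hence $x_t \cap x_{t'}$ is merely \emph{finite} (and likewise $x_{t^\frown\ell} \subset^* x_t$, not $\subset$). This does not break the argument: in each case you simply choose $m$ large enough to avoid the finitely many points of $\bigcup\{ x_t \cap x_{t'} : t' \in T^p \cap D_Y\}$ (respectively of $x_{t^\frown\ell}\setminus x_t$ and of the corresponding intersections in the $t < s_0$ case). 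But your citation of Definition~\ref{def.Q0} does not support literal disjointness, and as written clause (4) would not be ``vacuous for any $m \in x_t$''.

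Your reading of the displayed $x_\xi$ as $x_\gamma$ is the intended one, and your handling of the $Y$-saturation clauses via (4) and (5) is correct.
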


\begin{lemma} If $W$ does not have cofinality $\omega_1$, then
  $Q(\mathcal S_W, Y)$ is $\sigma$-centered.
\end{lemma}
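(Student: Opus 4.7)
The plan is to exhibit a dense $\sigma$-centered subposet $Q^{**}$ of $Q(\mathcal S_W, Y)$, which suffices to conclude that $Q(\mathcal S_W, Y)$ itself is $\sigma$-centered. The basic density step is as follows: given $q \in Q$ and $\xi \in W$ with $\xi > \max H^q$, the condition $q^* = (n^{q^*}, a^{q^*}, x^{q^*}, b^{q^*}, T^q, H^q \cup \{\xi\})$ extends $q$, provided we choose $n^{q^*}$ larger than every element of the finite sets $a_{\xi'} \setminus a_\xi$ and $b_{\xi'} \setminus b_\xi$ for $\xi' \in H^q$ (so condition~(3) holds), and then extend $(a^q, x^q, b^q)$ to $[0, n^{q^*})$ by placing each $j \in [n^q, n^{q^*})$ in $a^{q^*}$, $b^{q^*}$, or $x^{q^*}$ compatibly with (4)--(7); the only potential forbidding combination is $j \in a_{\max H^q} \cap b_{\max H^q}$, which is empty, so a valid placement always exists.

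Using the hypothesis $\cf(W) \neq \omega_1$, fix a cofinal chain $C = \{\xi_k : k < |C|\} \subset W$. In the principal case $\cf(W) \leq \omega$---either $W$ has a maximum, is finite, or $\cf(W) = \omega$---the chain $C$ is at most countable. Let $Q^{**} = \{q \in Q : \max H^q \in C\}$; the density step shows that $Q^{**}$ is dense in $Q$. Partition $Q^{**}$ into countably many classes $\sigma^{-1}(n, a, x, b, k)$ where $\sigma(q) = (n^q, a^q, x^q, b^q, k)$ and $\xi_k = \max H^q$.

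To see that each class is centered, take finitely many conditions $q_1, \ldots, q_m$ sharing $\sigma = (n, a, x, b, k)$, and construct a common lower bound $r$. Set $H^r = \bigcup_i H^{q_i}$, $T^r = \bigcup_i T^{q_i}$, and choose $n^r$ strictly larger than every element of the finite set $\bigcup_{\xi' < \xi'' \in H^r} b(\xi', \xi'')$ so that condition~(3) holds for $H^r$. Extend $(a, x, b)$ to $[0, n^r)$: for each $j \in [n, n^r)$, the union of the constraints (4)--(7) from the various $r \leq q_i$ forbids $j$ from all three cells only if $j \in \bigcup_i a_{\max H^{q_i}} \cap \bigcup_i b_{\max H^{q_i}}$. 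Since all $\max H^{q_i}$ equal $\xi_k$, this intersection reduces to $a_{\xi_k} \cap b_{\xi_k} = \emptyset$, so every $j$ admits a valid placement and $r$ is the desired common lower bound.

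The main obstacle is the case $\cf(W) \geq \omega_2$, where $|C|$ is uncountable and the partition above has too many classes. Here the plan is to refine by the finite combinatorial type of $(a_{\max H^q} \cap N, b_{\max H^q} \cap N)$ for a suitably chosen $N > n^q$, and to exploit the cofinality hypothesis to decompose $C$ itself into countably many ``$n$-compatible'' pieces---sets $C_j$ within which $b(\eta_1, \eta_2) \subset n$ for all $\eta_1, \eta_2 \in C_j$. The exclusion of cofinality $\omega_1$ is precisely what rules out a Hausdorff-gap-type configuration in the tower that would otherwise force an uncountable chromatic number on the natural incompatibility graph of $C$; once the countable decomposition of $C$ is in place, substituting the index $j$ for $k$ in the class label reduces the situation to the centered-class argument of the previous paragraph.
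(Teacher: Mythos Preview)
The paper states this lemma without proof, so there is no reference argument to compare against directly. Your treatment of the case $\cf(W)\le\omega$ is essentially correct: after passing to the dense set of conditions with $\max H^q$ in a fixed countable cofinal set, indexing by $(n^q,a^q,x^q,b^q,\max H^q)$ gives countably many centered classes, since the only obstruction to amalgamating $q_1,\dots,q_m$ with a common index is a point $j\ge n$ lying in some $a_{\max H^{q_i}}\cap b_{\max H^{q_{i'}}}$, and this set is empty when all the $\max H^{q_i}$ coincide.

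The case $\cf(W)\ge\omega_2$, however, is not proved. What you have written there is a declaration of intent rather than an argument: you assert that the cofinal set $C$ can be split into countably many ``$n$-compatible'' pieces and that $\cf(W)\neq\omega_1$ ``rules out a Hausdorff-gap-type configuration,'' but neither claim is justified. The obstruction is real: for conditions sharing $(n,a,x,b)$ but with distinct $\alpha_i=\max H^{q_i}$, the set $(a_{\alpha_1}\cap b_{\alpha_2})\cup(a_{\alpha_2}\cap b_{\alpha_1})$ is finite yet need not lie below $n$, and nothing you have said establishes that the resulting incompatibility graph on $W$ has countable chromatic number for an \emph{arbitrary} tower of $T$-splitters of high cofinality. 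That $(\omega_1,\omega_1)$ is the only ZFC-provable gap type is a statement about the existence of separators, not about colourings of this sort. Your proposed refinement by $(a_{\max H^q}\cap N,\, b_{\max H^q}\cap N)$ only controls the interval $[n,N)$ and leaves $[N,\infty)$ untouched, with no single $N$ serving uniformly. To complete the proof you must actually produce the countable decomposition, or else find a different density reduction that forces the relevant finite differences below $n^q$ before you index; as written, the high-cofinality case is a genuine gap.
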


As usual with $(\omega_1,\omega_1)$-gaps, $Q(\mathcal S_W,Y)$ may not
(in general)
be ccc if $W$ has a cofinal $\omega_1$ sequence.

Let $0\notin C\subset \theta$ be cofinal and assume that if
$C\cap \gamma$ is cofinal in $\gamma$ and $\cf(\gamma)=\omega_1$, then
$\gamma\in C$.

\begin{definition} Fix\label{poset}
 any well-ordering $\prec$ of $H(\theta)$. We
  define a finite support iteration sequence $\{ P_\gamma, \dot
  Q_\gamma : \gamma\in \theta\}\subset H(\theta)$.  We abuse notation
  and use  $Q_0$ rather than  $\dot Q_0$ from definition \ref{def.Q0}. 
If $\gamma\notin C$, then let $\dot Q_\gamma$ be the $\prec$-least
among the list of $P_\gamma$-names of ccc posets  in
$H(\theta)\setminus \{ \dot Q_\xi : \xi\in \gamma\}$.
 If $\gamma\in C$, then let $\dot Y_\gamma$
be the
$\prec$-least $P_\gamma$-name of a subset $\Naturals$ which
is in $H(\theta)\setminus \{ \dot Y_\xi :\xi\in C\cap \gamma\}$.
Set $\dot Q_\gamma$ to be the $P_\gamma$ name of 
 $Q(\mathcal S_{C\cap \gamma}, \dot Y_\gamma)$ adding the partition
 $\{\dot a_\gamma, \dot x_\gamma, \dot b_\gamma\}$ and, where
$\mathcal S_{C\cap \gamma}$ is the $P_\gamma$-name of the
$T$-splitting tower $\{ (a_\xi,x_\xi,b_\xi) : \xi\in C\cap \gamma\}$. 

We view the members of $P_\theta$ as functions $p$ with finite domain
(or support) denoted $\dom(p)$.
\end{definition}

The main difficulty to the proof of Theorem \ref{main2} is to prove
that the iteration $P_\theta$ is ccc. Of course, since it is a finite
support iteration, this can be proven by induction at successor
ordinals. 

\begin{lemma} For\label{main2ccc}
 each $\gamma\in C$ such that $C\cap \gamma$ has
  cofinality $\omega_1$,  $P_{\gamma+1}$ is ccc.
\end{lemma}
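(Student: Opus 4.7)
The plan is to apply the standard iteration lemma: since $P_\gamma$ is ccc by the inductive hypothesis, $P_{\gamma+1}\cong P_\gamma * \dot Q_\gamma$ is ccc provided $P_\gamma\Vdash$ ``$\dot Q_\gamma$ is ccc''. Suppose for contradiction that some $p\in P_\gamma$ forces $\{\dot q_\alpha:\alpha\in\omega_1\}$ to be an antichain in $\dot Q_\gamma=Q(\mathcal S_{C\cap\gamma},\dot Y_\gamma)$, where each $\dot q_\alpha$ has data $(\dot n_\alpha,\dot a_\alpha,\dot x_\alpha,\dot b_\alpha,\dot T_\alpha,\dot H_\alpha)$. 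For each $\alpha$ I would choose $p_\alpha\leq p$ in $P_\gamma$ which decides these six components as concrete objects $(n_\alpha,a_\alpha,x_\alpha,b_\alpha,T_\alpha,H_\alpha)\in V$, and which additionally decides the finite traces $a_\xi\cap n_\alpha$, $b_\xi\cap n_\alpha$, $x_\xi\cap n_\alpha$ for every $\xi\in H_\alpha$ and $x_t\cap n_\alpha$ for every $t\in T_\alpha$; this uses the ccc of $P_\gamma$ and the fact that each such trace is forced to equal a concrete finite set in $V$.

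Next I would standardize in $V$. Apply the $\Delta$-system lemma to $\{H_\alpha\}$ and to $\{T_\alpha\}$ to obtain common roots $H_0$ and $T_0$ with pairwise disjoint tips. By pigeonhole, further thin so that $n_\alpha=n_*$ and $(a_\alpha,x_\alpha,b_\alpha)=(a_*,x_*,b_*)$ are constant, $T_0$ is fixed along with its traces $x_t\cap n_*$, and the traces $a_\xi\cap n_*$, $b_\xi\cap n_*$, $x_\xi\cap n_*$ for $\xi\in H_0$ and for $\xi$ in the tip (via the canonical order-iso of tips) are uniform. Using $\cf(C\cap\gamma)=\omega_1$, fix an increasing cofinal $\omega_1$-sequence in $C\cap\gamma$ and thin once more so that the tips $H_\alpha\setminus H_0$ occupy pairwise-disjoint intervals of this sequence, with $\alpha<\beta$ iff the $\alpha$-tip precedes the $\beta$-tip.

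Now apply the ccc of $P_\gamma$ to obtain $\alpha<\beta$ and a common extension $r\in P_\gamma$ of $p_\alpha$ and $p_\beta$. I claim $r$ can be further strengthened to some $r^*$ below which $\dot q_\alpha$ and $\dot q_\beta$ admit a common extension $\dot q^*$ in $\dot Q_\gamma$, yielding the desired contradiction. The candidate has $T^{q^*}:=T_\alpha\cup T_\beta$, $H^{q^*}:=H_\alpha\cup H_\beta$, and $n^{q^*}$ chosen (after strengthening $r$ to decide them) larger than each of the finite mod-finite differences $a_\xi\setminus a_\eta$, $b_\xi\setminus b_\eta$, $x_\eta\setminus x_\xi$ for $\xi<\eta$ in $H^{q^*}$, all forced finite by tower axiom (2). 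The partition $(a_*,x_*,b_*)$ is extended to a partition of $n^{q^*}$ whose new $x$-part lies in the set $\bigcap_{\xi\in H^{q^*}}x_\xi\cap\bigcap_{t\in(T_\alpha\cup T_\beta)\cap D_{\dot Y_\gamma}}(x_t\cap\dot Y_\gamma)$, with the $a$- and $b$-parts disjoint from $b_{\max(H^{q^*})}$ and $a_{\max(H^{q^*})}$ respectively (which absorbs the analogous constraints from both $\dot q_\alpha$ and $\dot q_\beta$ since the tower $(a_\xi),(b_\xi)$ are mod-finite increasing). The tower-of-splitters axiom together with the defining property of $D_{\dot Y_\gamma}$ (that $x_t\cap \dot Y_\gamma \cap \bigcap_{\xi\in H}x_\xi$ is infinite for every finite $H$) forces this intersection to be infinite, so the extension exists below $r^*$.

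The main obstacle is carrying out the standardization step in $V$ while the tower $\mathcal S_{C\cap\gamma}$ lives only in $V[G\cap P_\gamma]$; this is handled by requiring each $p_\alpha$ to pre-decide the finite traces of the tower up to $n_\alpha$. A secondary delicate point, by contrast with the case where $W$ has non-$\omega_1$-cofinality (where the previous lemma gives $\sigma$-centeredness outright), is that here we genuinely need to exploit $P_\gamma$-genericity of the tower at the amalgamation step: the disjointness of the $H$-tips, combined with the forced infiniteness of the relevant intersections, is what allows a partition of $[n_*,n^{q^*})$ to satisfy clauses (4)--(7) of the $Q$-order for $\dot q_\alpha$ and $\dot q_\beta$ simultaneously.
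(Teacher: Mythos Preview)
Your amalgamation step contains a genuine gap. You write that putting the new $a$-part disjoint from $b_{\max(H^{q^*})}$ and the new $b$-part disjoint from $a_{\max(H^{q^*})}$ ``absorbs the analogous constraints from both $\dot q_\alpha$ and $\dot q_\beta$ since the tower $(a_\xi),(b_\xi)$ are mod-finite increasing''. But mod-finite is exactly the problem: for $q^*\leq \dot q_\alpha$ clause~(6) demands that $a^{q^*}\setminus a_*$ be disjoint from $b_{\max(H_\alpha)}$, not merely from $b_{\max(H_\beta)}=b_{\max(H^{q^*})}$. Concretely, consider a point
\[
i\in [n_*,n^{q^*})\ \cap\ b_{\max(H_\alpha)}\ \cap\ a_{\max(H_\beta)}.
\]
This set is finite (it is contained in $b_{\max(H_\alpha)}\setminus b_{\max(H_\beta)}$), but nothing you have arranged forces it to lie below $n_*$. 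Such an $i$ cannot be placed in $a^{q^*}$ (blocked by the $\alpha$-constraint~(6)), nor in $b^{q^*}$ (blocked by the $\beta$-constraint~(7)), nor in $x^{q^*}$ (since $i\notin x_{\max(H_\alpha)}$ violates~(5)). The symmetric set $a_{\max(H_\alpha)}\cap b_{\max(H_\beta)}$ causes the same obstruction. Merely strengthening $r$ to \emph{decide} these finite differences does not make them sit below $n_*$; you would have to go back and \emph{force} them below $n_*$, and your condition $r\in P_\gamma$ has no handle on the stages $\max(H_\alpha),\max(H_\beta)$, which lie in the tips and need not be in $\dom(p_\alpha)\cup\dom(p_\beta)$ at all.

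The paper's proof avoids this by a different architecture: it does not treat the problem as ``$P_\gamma$ forces $\dot Q_\gamma$ ccc'' and amalgamate in the last coordinate. Instead it isolates a dense set $P_{\gamma+1}^*\subset P_{\gamma+1}$ of conditions $p$ for which there is a \emph{single} integer $n$ such that at every $\beta\in\dom(p)\cap C$ the tuple $p(\beta)$ is already decided with first coordinate $n$ and with $H^\beta=\dom(p)\cap C\cap\beta$. Thus the $H$-sets are literally subsets of $\dom(p)$. When amalgamating $p_\alpha,p_\beta\in P_{\gamma+1}^*$ one takes the supports to be a $\Delta$-system whose tails are separated by some $\delta<\gamma$ (using $\cf(C\cap\gamma)=\omega_1$), finds a common extension below $\delta$ by the ccc of $P_\delta$, and then at \emph{every} coordinate $\xi\in\dom(p_\beta)\cap C$ above $\delta$ replaces $H^{p_\beta(\xi)}$ by $H^{p_\beta(\xi)}\cup H^{p_\alpha(\gamma)}$. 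One then checks, by induction on $\eta\in C$, that the resulting condition $q$ forces $(a_\eta\setminus a_\xi)\cup(b_\eta\setminus b_\xi)\cup(x_\xi\setminus x_\eta)\subset n^*$ for all relevant $\xi<\eta$; hence $q(\gamma)$ is already a legitimate condition with the same $n^*$, and no extension of the partition is ever needed. The point is that the tower $\mathcal S_{C\cap\gamma}$ is being built \emph{by} the iteration, so by enlarging the $H$-sets at intermediate stages one coerces the later tower elements to line up with the earlier ones below $n^*$. Your two-step decomposition discards exactly this leverage.
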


\begin{proof}
We proceed by induction. For each $\alpha$, define $p\in P_\alpha^*$
if $p\in P_\alpha$ and there is an $n\in \Naturals$ such that
\begin{enumerate}
\item for each $\beta\in \dom(p)\cap C$, with $H^\beta=\dom(p)\cap
  C\cap \beta$, there are subsets  
$a^\beta,x^\beta,b^\beta$ of $n$ and $T^\beta \in  [T]^{<\omega}$
 such that $p\restriction \beta \forces{P_\beta}
{p(\beta)=(n,a^\beta,x^\beta,b^\beta,T^\beta,H^\beta)}$ 
\end{enumerate}
Assume that $P_\beta^*$ is dense in $P_\beta $ and let $p\in
P_{\beta+1}$. To show that $P_{\beta+1}^*$ is dense in $P_{\beta+1}$
we must find some $p^* \leq p$ in $P_{\beta+1}^*$. If $\beta\notin C$
and $p^* \in P_{\beta}^*$ is below $p\restriction \beta$, then
$p^*\cup \{(\beta,p(\beta)\}$ is the desired element of
$P_{\beta+1}^*$. Now assume that $\beta\in C$ and assume that
$p\restriction \beta\in P_\beta^*$ and that $p\restriction \beta$
forces that $p(\beta)$ is the tuple $(n_0,a,x,b,\tilde T,\tilde
H)$. By an easy density argument, we may assume that $\tilde H\subset
\dom(p)$. Let $n^*$ be the integer witnessing that $p\restriction \beta\in
P_{\beta}^*$. Let $\zeta$ be the maximum element of $\dom(p)\cap
C\cap\beta$ and let $p\restriction\zeta \forces{P_\zeta} {p(\zeta) = 
(n^*,a^\zeta,x^\zeta,b^\zeta,T^\zeta, H^\zeta)}$ as per the definition
of $P_{\zeta+1}^*$. 
Notice that since $\tilde H\subset H^\zeta$ we have that
$$p\restriction \beta \forces{P_\beta} { 
(n^*,~a^*, x, b^*,
T^\zeta\cup \tilde T, H^\zeta\cup\{\zeta\}) \leq p(\beta)}$$
where $a^*= a\cup ([n_0,n^*)\setminus b^\zeta)$ and
$b^*= b\cup ([n_0,n^*)\cap b^\zeta)$. Defining $p^*\in P_{\beta+1}$ by
$p^*\restriction \beta = p\restriction \beta$ and $p^*(\beta) =
(n^*,~a^*, x, b^*,
T^\zeta\cup \tilde T, H^\zeta\cup\{\zeta\})$ completes the 
proof that
$P_{\beta+1}^*$ is dense in $P_{\beta+1}$, and by induction, that this
holds for $\beta=\gamma$. 

Now assume that $\{p_\alpha : \alpha\in \omega_1\}\subset
P_{\gamma+1}^*$. By passing to a subcollection, we may assume that 
\begin{enumerate}
\item the collection $\{ T^{p_\alpha(\gamma)} : \alpha \in \omega_1\}$
  forms a $\Delta$-system with root $T^*$;
\item the collection $\{\dom(p_\alpha) : \alpha\in \omega_1\}$ also
  forms a $\Delta$-system with root $R$;
\item there is a tuple $(n^*,a^*,x^*,b^*)$ so that for all $\alpha\in
  \omega_1$, $a^{p_\alpha(\gamma)} =a^*$, $x^{p_\alpha(\gamma)}=x^*$,
  and $b^{p_\alpha(\gamma)}=b^*$. 
\end{enumerate}

Since $C\cap\gamma$ has a cofinal sequence of order type $\omega_1$,
there is a $\delta\in \gamma$ such that $R\subset \delta$ and, we may
assume, $(\dom(p_\alpha)\setminus \delta) \subset
\min(\dom(p_\beta)\setminus \delta)$ for $\alpha<\beta<\omega_1$. 
Since $P_\delta$ is ccc, there is a pair $\alpha<\beta<\omega_1$ such
that $p_\alpha\restriction \delta$ is compatible with
$p_\beta\restriction \delta$. Define $q\in P_{\gamma+1}$ by
\begin{enumerate}
\item $q\restriction \delta$ is any element of $P_\delta$ which is
  below each of $p_\alpha\restriction\delta$ and
  $p_\beta\restriction\delta$,
\item if $\delta\leq \xi\in \gamma\cap\dom(p_\alpha)$, then
  $q(\xi)=p_\alpha(\xi) $,
\item if $\delta\leq \xi\in \dom(p_\beta)\setminus C$, then
  $q(\xi)=p_\beta(\xi) $,
\item if $\delta\leq \xi\in \dom(p_\beta)\cap C$, then 
$$q(\xi) = 
(n^*,a^{p_\beta(\xi)},x^{p_\beta(\xi)},b^{p_\beta(\xi)},
T^{p_\beta(\xi)}, H^{p_\beta(\xi)}\cup H^{p_\alpha(\gamma)}).$$
\end{enumerate}

The main non-trivial fact about $q$ is that it is in $P_{\gamma+1}$
which depends on the fact that, by induction on $\eta\in C\cap \gamma$, 
$q\restriction
\eta$  forces that 
$$(a_\eta\setminus a_\xi)\cup (b_\eta\setminus b_\xi)\cup
(x_\xi\setminus x_\eta) 
\subset n^* \ \mbox{for}\ \xi\in
C\cap \eta . $$ 
It now follows trivially that $q$ is below each of $p_\alpha$ and
$p_\beta$. 
\end{proof}

\bgroup
\def\proofname{Proof of Theorem \ref{main2}}

\begin{proof}
This completes the construction of the ccc poset $P$ ($P_\theta$ as
above). Let $G\subset (P\times \Pi_{\lambda\in \Lambda} T_\lambda)$ be
generic. It follows that $V[G\cap P]$ is a model of Martin's Axiom and
$\mathfrak c=\theta$.  Furthermore by applying Lemma
\ref{chain.condition} with $\mu=\omega$ and Lemma \ref{distrib},
 we have that $P_2 = 
 \Pi_{\lambda\in \Lambda} T_\lambda$ is $\omega_1$-distributive in the
 model $V[G\cap P]$. Therefore all subsets of $\Naturals$ in the model
 $V[G]$ are also in the model $V[G\cap P]$. 

Fix any  $\lambda\in
 \Lambda$ and  let $\rho_\lambda$ denote the generic branch in $T_\lambda$
 given by $G$. Let $G^\lambda$ denote the generic filter on 
$P\times \Pi\{ T_\mu : \lambda\neq \mu\in \Lambda\}$ and work in the
 model $V[G^\lambda]$. It follows easily by Lemma
 \ref{chain.condition} and Lemma \ref{distrib}, that $T_\lambda$ is a 
$\lambda^+$-Souslin tree in this model. Therefore by 
 Proposition \ref{tie-sets}, $K_\lambda =
 \bigcap_{\alpha<\lambda^+} x_{\rho_\lambda(\alpha)}^*$ is a tie-set
 of $\mathfrak b\delta$-type $(\lambda^+,\lambda^+)$ in $V[G]$.
 By the
 definition of the iteration in $P$, it follows that condition (4)
of Lemma \ref{tie-sets} is also satisfied, hence the tie-set
 $K = \bigcap_{\xi\in C} x_\xi^*$ meets $K_\lambda$ in a single
 point $z_\lambda$. A simple genericity argument
confirms  that conditions (5) and (6) of
Proposition   \ref{tie-sets} also holds, hence $z_\lambda$ is a
tie-point of  $K_\lambda$. 

It follows from Corollary \ref{omit} that there are no {\em unwanted}
tie-sets in $\betan$ in $V[G]$, at least if there are none in $V[G\cap
  P]$. Since $\mathfrak p=\mathfrak c$ in $V[G\cap P]$, it follows
from Proposition \ref{mathp} that indeed there are no such tie-sets in
$V[G\cap P]$. 
\end{proof}

\egroup

Unfortunately the next result shows that the construction 
does not provide us with our desired variety of tie-points (even with
variations in the definition of the iteration).
We do not know if  $\mathfrak b\delta$-type  
can be improved to $\delta$-type (or simply exclude tie-points
altogether). 

\begin{proposition} In the model constructed in Theorem \ref{main2},
there are no
tie-points with $\mathfrak b\delta$-type $(\kappa_1,\kappa_2)$
for any $\kappa_1\leq\kappa_2<\mathfrak c$, 
\end{proposition}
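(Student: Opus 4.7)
The plan is to derive a contradiction in two regimes, observing first that any tie-point $x$ with $\betan=A\tie{x}B$ is automatically a tie-set (with $K=\{x\}$) of the same $\mathfrak b\delta$-type $(\kappa_1,\kappa_2)$, so the hypothesis of the corollaries on tie-sets applies directly.

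\textbf{Regime 1.} If $\kappa_1\neq\kappa_2$, or $\kappa_1\notin\{\lambda^+:\lambda\in\Lambda\}$, Corollary \ref{omit} transfers a tie-set of type $(\kappa_1,\kappa_2)$ to $V[G\cap P]$, which is a model of $\mathrm{MA}+\mathfrak c=\theta$. Hence $\mathfrak p=\theta>\kappa_1$, contradicting Proposition \ref{mathp}. This leaves only the case $\kappa_1=\kappa_2=\lambda^+$ for some $\lambda\in\Lambda$.

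\textbf{Regime 2.} In this harder case I would fix dense cofinal $\lambda^+$-chains $\mathcal J_A\subset\mathcal I_A$ and $\mathcal J_B\subset\mathcal I_B$; by Lemma \ref{chains} both may be assumed to lie in $V[G\cap(P\times T_\lambda)]$. The Proposition \ref{mathp} argument cannot be applied in $V[G]$ directly because the $\lambda^+$-tower $\{x_{\rho_\lambda(\alpha)}\}$ forces $\mathfrak p\leq\lambda^+$ there. Instead, working in $V[G\cap P]$, pick $T_\lambda$-names $\dot J^A_\alpha,\dot J^B_\alpha$ ($\alpha<\lambda^+$) for the enumerations and collect the possible decided values into
\[
\mathcal A=\{a:\exists t\in T_\lambda\,\exists\alpha\ t\Vdash_{T_\lambda}\dot J^A_\alpha=a\},\quad
\mathcal B=\{b:\exists t\in T_\lambda\,\exists\alpha\ t\Vdash_{T_\lambda}\dot J^B_\alpha=b\}.
\]
Since $T_\lambda$ is $\lambda^+$-cc, each name admits at most $\lambda$ values, so $|\mathcal A\cup\mathcal B|\leq\lambda^+<\theta=\mathfrak p$ in $V[G\cap P]$. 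Solovay's Lemma (with Bell's Theorem) then yields $C\in V[G\cap P]$ such that both $C$ and $\Naturals\setminus C$ meet every infinite $J\setminus\bigcup\mathcal J'$ with $\{J\}\cup\mathcal J'\in[\mathcal A\cup\mathcal B]^{<\omega}$. This splitting property is absolute, so $C$ retains it in $V[G\cap(P\times T_\lambda)]$, and in particular with respect to $\mathcal J_A\cup\mathcal J_B\subset\mathcal A\cup\mathcal B$.

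The final step mimics the proof of Proposition \ref{mathp}: without loss of generality $C\notin x$, so $C\subset^*a\cup b$ with $a\in\mathcal I_A$ and $b\in\mathcal I_B$. Density of $\mathcal J_A$ furnishes $a_\alpha\in\mathcal J_A$ with $a\subset^*a_\alpha$, and for any $a_\beta\in\mathcal J_A$ with $\beta>\alpha$ the splitting property forces $C\cap(a_\beta\setminus a_\alpha)$ to be infinite. Yet $C\cap(a_\beta\setminus a_\alpha)\subset^*b\cap a_\beta$, and here the tie-\emph{point} hypothesis enters decisively: $\{x\}=A\cap B$ implies $(b\cap a_\beta)^*\subset A\cap B=\{x\}$, so $(b\cap a_\beta)^*$ is empty or a single non-isolated point, forcing $b\cap a_\beta$ to be finite---a contradiction. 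The main obstacle is exactly this second regime: the collapse of $\mathfrak p$ in $V[G]$ blocks a direct appeal to Proposition \ref{mathp}, and the resolution requires both manufacturing the Solovay splitter one model down in $V[G\cap P]$ via the name-value family, and exploiting the singleton hypothesis $|K|=1$ (which is precisely what distinguishes tie-points from general tie-sets) to guarantee that $\mathcal I_A\cap\mathcal I_B$ has only finite members.
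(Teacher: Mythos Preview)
Your proposal is correct, and the overall architecture matches the paper's proof: both begin with the reduction via Corollary~\ref{omit} to the case $\kappa_1=\kappa_2=\lambda^+$ with $\lambda\in\Lambda$, both pass to $V[G\cap P]$ to exploit $\mathfrak p=\theta$, and both collect the possible values of the $T_\lambda$-names for the chains $\mathcal J_A,\mathcal J_B$ into a family of size $<\theta$.

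The two arguments diverge in the endgame. The paper first chooses $C$ to split every element of a Boolean subalgebra $\mathcal B$ of size $<\theta$ containing all name-values, then introduces an auxiliary set $Y$ with $C\cap Y\in\mathcal I_A$ and $C\setminus Y\in\mathcal I_B$, fixes a condition $t_0$ forcing this, and runs a finite-cover argument: density plus $\mathfrak p=\theta$ gives a finite $\mathcal A'$ covering $C$, and one then finds $a,b\in\mathcal A'$ with $a\cap b$ infinite that are forced (by possibly incomparable $t_1,t_2\leq t_0$) into opposite sides; absoluteness of the arithmetic statements $C\cap a\subset^* Y$ and $C\cap b\subset^*\Naturals\setminus Y$ then clashes with $C\cap a\cap b$ being infinite. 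Your argument instead lifts the proof of Proposition~\ref{mathp} directly: you manufacture the Solovay splitter $C$ against the name-value family $\mathcal A\cup\mathcal B$, pass to $V[G]$, and derive the contradiction from the single observation that for a tie-\emph{point} one has $\mathcal I_A\cap\mathcal I_B=[\Naturals]^{<\omega}$, so $b\cap a_\beta$ must be finite while $C\cap(a_\beta\setminus a_\alpha)\subset^* b\cap a_\beta$ is infinite. Your route avoids the auxiliary $Y$ and the covering step and makes the dependence on $|K|=1$ completely transparent; the paper's route is a bit more indirect but packages the same idea through $Y$. One minor imprecision: your bound ``at most $\lambda$ values per name'' should read $\lambda^+$ (since $|T_\lambda|=\lambda^+$), and ``for any $a_\beta$ with $\beta>\alpha$'' should be ``for some $\beta$ with $a_\beta\setminus a_\alpha$ infinite'' --- but neither affects the argument, as $\lambda^+<\theta$ and such $\beta$ exist by unboundedness of $\mathcal J_A$.
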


\begin{proof}
Assume that $\betan = A\tie{x}B$ and that $\delta(\mathcal
I_A)=\kappa_1$ and $\delta(\mathcal I_B)=\kappa_2$. 
It follows from Corollary \ref{omit} that we can assume that
$\kappa_1=\kappa_2 =\lambda^+$ for some $\lambda\in \Lambda$. 
Also, following the
proof of Corollary \ref{omit}, there are
$P\times T_{\lambda}$-names 
$\mathcal J_A =
\{ \tilde a_\alpha : \alpha\in \lambda^+\}$ and $P\times
T_{\lambda^+}$-names $\mathcal J_B =
\{\tilde b_\beta :
\beta\in \lambda^+\}$ such that the valuation of these names by $G$
result in  increasing (mod finite) chains in $\mathcal I_A$ and
$\mathcal I_B$ respectively whose downward closures are dense. 
Passing to $V[G\cap P]$,
since $T_{\lambda}$ has the $\theta$-cc,
there is a Boolean 
subalgebra  $\mathcal B \in [\mathcal P(\Naturals)]^{<\theta}$
 such that
each $\tilde a_\alpha$ and $\tilde b_\beta$ is a name of a member of
$\mathcal B$. Furthermore, there is an infinite $C\subset \Naturals$
such that $C\notin x$ and each of 
$b\cap C $ and $b\setminus C$ are infinite for all $b\in \mathcal B$. 
Since $C\notin x$, there is a $Y\subset \Naturals$ (in $V[G]$) such
that $C\cap Y\in \mathcal I_A$ and $C\setminus Y\in \mathcal I_B$. 
Now choose $t_0\in T_{\lambda}$
which forces this about $C$ and $Y$.
Back in $V[G\cap P]$, set
$$\mathcal A = \{ b\in \mathcal B : (\exists t_1 \leq
t_0~) ~~ t_1\forces{T_{\lambda}}
 {b \in \mathcal J_A\cup \mathcal J_B}\}~.
$$
Since $V[G\cap P]$ satisfies $\mathfrak p=\theta$
 and $\mathcal A^\downarrow $ is forced by $t_0$
 to be dense in $[\Naturals]^\omega$,
 there must be a
finite subset $\mathcal A'$ of $\mathcal A$ which covers $C$.
It also follows easily then that there must be some $a,b\in \mathcal
A'$ and 
$t_1,t_2$ each below $t_0$ 
such that $t_1\forces{T_{\lambda^+ }}{a\in \mathcal J_A}$,
 $t_2\forces{T_{\lambda^+}}{b\in \mathcal J_B}$, and $a\cap b$ is
 infinite. The final contradiction is that we will now have that $t_0$
 fails to force that $C\cap a\subset^*Y$ and $C\cap b\subset^*
 (\Naturals \setminus Y)$. 
\end{proof}

\section{$T$-involutions}

In this section we strengthen the result in Theorem \ref{main2} by
making each  $K\cap K_\lambda$ a symmetric tie-point
in $K_\lambda$ (at the expense of
weakening Martin's Axiom in 
$V[G\cap P]$).
 This is progress in producing involutions with some
control over the fixed point set but we are still not able 
 to make $K$ the fixed point set of an involution. 
A poset is said to be $\sigma$-linked if there is a countable
collection of linked (elements are pairwise compatible) which union to
the poset. The statement $\hbox{MA}({\sigma}-\hbox{linked})$
 is, of course, the
assertion that Martin's Axiom holds when restricted to $\sigma$-linked
posets.

Our approach is to replace $T$-splitting towers by the following
notion. If $f$ is a (partial) involution on $\Naturals$, let $\min(f)
= \{ n\in \Naturals : n<f(n)\}$ and $\max(f)=\{ n\in \Naturals :
f(n)<n\}$ (hence $\dom(f)$ is partitioned into $\min(f)\cup
\fix(f)\cup \max(f)$).

\begin{definition} A sequence $\mathfrak T = 
\{ (A_\xi, f_\xi) : \xi\in W\}$ is
  a tower of $T$-involutions  if $W$ is a set of ordinals and
for $\xi<\nu\in W$ and $t\in T$
\begin{enumerate} 
\item $A_\nu\subset^* A_\xi$;
\item $f_\xi^2=f_\xi$ and $f_\xi\restriction (\Naturals \setminus
  \fix(f_\xi) ) \subset^* f_\eta$;
\item $f_\xi[x_t] =^* x_t$ and $\fix(f_\xi)\cap x_t$ is infinite;
\item $f_\xi([n,m)) = [n,m)$ for $n<m$ both in $A_\xi$.
\end{enumerate}
Say that $\mathfrak T$, a tower of $T$-involutions,
is  {\bf full\/}
 if $K= K_{\mathfrak T} = \bigcap \{
 \fix(f_\xi)^* : \xi \in W\}$ is a tie-set with $\betan = A\tie{K} B$
 where $A = K\cup \bigcup \{\min(f_\xi)^* : \xi\in W\}$ and 
$B = K\cup \bigcup \{\max(f_\xi)^* : \xi\in W\}$.  
\end{definition}

If $\mathfrak T$ is a tower of $T$-involutions, then there
is a natural involution $F_{\mathcal T}$ on $\bigcup_{\xi\in W}
(\Naturals\setminus 
\fix(f_\xi))^*$, but this $F_{\mathcal T}$ need not extend to an
involution on the closure of the union - even if the tower is full.

In this section we prove the following theorem.

\begin{theorem} Assume\label{main3}
 GCH and that
 $\Lambda$ is a set of regular uncountable
  cardinals such that for each $\lambda\in \Lambda$, $T_\lambda$ is a
  ${<}\lambda$-closed $\lambda^+$-Souslin tree. Let $T$ denote the tree
  sum of $\{ T_\lambda : \lambda\in \Lambda\}$. 
There is
  forcing   extension in which there is $\mathfrak T$, a full tower of
  $T$-involutions, such that the associated tie-set $K$ has 
$\mathfrak b\delta$-type $(\mathfrak c,\mathfrak c)$  and such that
for each $\lambda\in \Lambda$, there is a
 tie-set $K_\lambda$ of $\mathfrak b\delta$-type
  $(\lambda^+,\lambda^+)$ such that 
  $F_{\mathfrak T}$ does induce an involution on $K_\lambda$ with
  a singleton fixed point set $\{z_\lambda\}=K\cap K_\lambda$.
 Furthermore, for
  $\mu\leq\lambda<\mathfrak c$, if $\mu\neq\lambda$ or $\lambda\notin
  \Lambda$, then there is no tie-set of $\mathfrak b\delta$-type
  $(\mu,\lambda)$. 
\end{theorem}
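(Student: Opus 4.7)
The plan is to run the Theorem \ref{main2} construction with the $T$-splitter poset $Q(\mathcal S_W,Y)$ replaced throughout by a $T$-involution poset $Q^{\text{inv}}(\mathfrak T_W,Y)$. A condition $q\in Q^{\text{inv}}(\mathfrak T_W,Y)$ is a tuple $(n^q,A^q,f^q,T^q,H^q)$ where $f^q$ is an involution of $n^q$ that extends $\bigcup_{\xi\in H^q}f_\xi\restriction n^q$ off the fixed points, preserves each block cut out by $A^q$, and preserves $x_t\cap n^q$ setwise for every $t\in T^q$. Extension $q\leq p$ requires $f^q\supset f^p$ together with the familiar $Y$-absorption clause $x_t\cap(\fix(f^q)\setminus\fix(f^p))\subset Y$ for $t\in T^p\cap D_Y$, and the usual block-containment clauses for $H^p$. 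The block-preservation requirement on $f^q$ is present precisely to force clause (4) in the definition of a tower of $T$-involutions, which is the structural feature that later permits a continuous extension of $F_{\mathfrak T}$.

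Second, I would re-prove the combinatorial lemmas of Section~2 in this setting. The essential change is that $Q^{\text{inv}}(\mathfrak T_W,Y)$ is only $\sigma$-linked rather than $\sigma$-centered: two conditions sharing the stem $(n^q,A^q,f^q,T^q)$ are pairwise compatible, but three such conditions with different pairings built above $n^q$ need not amalgamate. Consequently the intended forcing extension $V[G\cap P]$ will satisfy $\hbox{MA}(\sigma\text{-linked})+\mathfrak c=\theta$, and in particular $\mathfrak p=\theta$ by Bell's theorem, which is all we need downstream. Lemma \ref{main2ccc} adapts verbatim, since at the $\Delta$-system step the common integer $n^*$ already decides the involution values, so the $f$-component of $q(\xi)$ is just $f^{p_\beta(\xi)}$. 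Lemmas \ref{chain.condition}, \ref{distrib}, \ref{chains} and Corollary \ref{omit} then carry over unchanged, ruling out every unwanted $\mathfrak b\delta$-type. The tie-set $K=\bigcap_{\xi\in C}\fix(f_\xi)^*$ and each $K_\lambda=\bigcap_{\alpha<\lambda^+}x_{\rho_\lambda(\alpha)}^*$ arise exactly as in the proof of Theorem \ref{main2}, and $K\cap K_\lambda=\{z_\lambda\}$ comes from the Proposition \ref{tie-sets}(5) genericity argument, with the club stages enumerating all $P\times T_\lambda$-names of subsets of $\Naturals$.

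The main obstacle is the genuinely new claim that $F_{\mathfrak T}$ descends to a continuous involution of $K_\lambda$ with fixed-point set exactly $\{z_\lambda\}$. A priori $F_{\mathfrak T}$ is defined only on the open set $\bigcup_\xi (\Naturals\setminus\fix(f_\xi))^*$, and we must both extend it continuously to the closure and check that the extension acts as an involution on $K_\lambda$ with a unique fixed point. Here the block-preservation clause (4) is decisive: it forces each $f_\xi$ to permute $x_t$ mod-finitely for every $t\in T$, so the induced action respects every $x_{\rho_\lambda(\alpha)}^*$, which gives a continuous extension to the closure and hence to $K_\lambda$. Condition (3), that $\fix(f_\xi)\cap x_t$ is infinite, guarantees that the fixed-point set of this extension is nonempty; the same genericity argument that gives $|K\cap K_\lambda|=1$ then collapses it to $\{z_\lambda\}$. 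Finally, since $f_\xi$ swaps $\min(f_\xi)$ with $\max(f_\xi)$ setwise for every $\xi$, the induced involution interchanges the two closed halves of $K_\lambda$ through $z_\lambda$, completing the strengthening over Theorem \ref{main2}.
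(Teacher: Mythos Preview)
Your overall plan---replace $Q(\mathcal S_W,Y)$ by an involution-building poset and otherwise rerun the Theorem~\ref{main2} argument---is exactly the paper's approach, and your description of the poset matches Definition~\ref{tower}. The treatment of $K$, $K_\lambda$, $z_\lambda$ and the exclusion of unwanted tie-sets via Proposition~\ref{tie-sets} and Corollary~\ref{omit} is also what the paper does (indeed the paper simply declares the proof of Theorem~\ref{main3} ``virtually the same'' as that of Theorem~\ref{main2} once ccc is established).

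The gap is in the ccc step. You assert that Lemma~\ref{main2ccc} adapts verbatim, but it does not. Lemma~\ref{main2ccc} is stated and proved only for $\gamma\in C$ with $\cf(C\cap\gamma)=\omega_1$; at other $\gamma\in C$ the Section~2 iteration is ccc because $Q(\mathcal S_{C\cap\gamma},Y)$ is $\sigma$-centered there. The involution poset is \emph{not} $\sigma$-centered, and the paper explicitly warns that ``special (parity) properties of the family $\{x_t:t\in T\}$ are needed to ensure ccc even for cases when $\cf(\gamma)$ is not $\omega_1$.'' Accordingly the paper's Lemma~\ref{main3ccc} is proved for \emph{every} $\gamma\in C$, and its argument is structurally different: rather than using the cofinality hypothesis to push the $\Delta$-system root below some $\delta<\gamma$ and invoke ccc of $P_\delta$, one thins the $\omega_1$-family so that \emph{any two} surviving conditions are directly compatible. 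To amalgamate at non-$C$ coordinates lying in the root one fixes a linking decomposition $\dot Q_\xi=\bigcup_n\dot Q(\xi,n)$ together with a name $\dot r_\xi$ for a common lower bound of two conditions in the same link---and this is precisely why the non-$C$ iterands must be $\sigma$-linked. So the restriction to $\sigma$-linked posets is imposed to make the ccc proof go through, not (as your write-up suggests) because the involution poset itself is $\sigma$-linked; your stem $(n^q,A^q,f^q,T^q)$ does not witness linkedness, since two conditions with the same stem but different $H^q$ need not have coherent $f_{\alpha^q}$'s above $n^q$. There is also a new density wrinkle in showing $P^*_{\beta+1}$ is dense: before building the extension $\bar f$ one must decide witnesses $y_t\in\dot Y_\beta\cap x_t\setminus\bigcup\{x_s:s\in T^p,\ s\not\leq t\}$ for each $t\in D_{\dot Y_\beta}\cap T^{p(\beta)}$ (using that $T$ is infinitely branching), a step with no counterpart in Lemma~\ref{main2ccc}.
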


\begin{question} Can the tower $\mathfrak T$ in Theorem \ref{main3} be
  constructed so that $F_{\mathfrak T}$ extends to an involution of
  $\betan$ with $\fix(F)=K_{\mathfrak T}$?
\end{question}

We introduce $T$-tower extending forcing.

\begin{definition} If\label{tower}
 $\mathfrak T=
\{ (A_\xi, f_\xi) : \xi\in W\}$ is a tower of $T$-involutions and $Y$
is a subset of $\Naturals$, we define the poset $Q=Q(\mathfrak T,Y)$ as
follows. 
    Let $E_Y $ be the (possibly empty) set
  of minimal  elements of $T$ such that there is some finite $H\subset
  W$ such that 
$x_t\cap Y\cap \bigcap_{\xi\in H} \fix(f_\xi)$ is
  finite. Let $D_Y = E_Y^\perp = \{ t \in T : (\forall s\in E_Y)~~
  t\perp s\}$. 
A tuple $q\in Q$ if $q=(a^q, f^q, T^q, H^q)$ where:
\begin{enumerate}
\item $H^q\in [W]^{<\omega}$, $T^q\in [T]^{<\omega}$, 
and $n^q=\max(a^q) \in A_{\alpha^q}$ where $\alpha^q=\max(H^q)$,
\item $f^q$ is an involution on $n^q$,
\item $(A_{\alpha^q}\setminus n^q)
\subset A_\xi$ for each $\xi\in H^q$,
\item $\fin(T^q)\subset n^q$,
\item $f_\xi\restriction (\Naturals\setminus(\fix(f_\xi)\cup n^q))\subset
  f_{\alpha^q}$ for $\xi\in H^q$,
\item $f_{\alpha^q}[x_t\setminus n^q] = x_t\setminus n^q$ for $t\in
  T^q$,
\end{enumerate}
We define $p<q$ if $n^p\leq n^q$, and for $t\in T^p$ and $i\in 
[n^p,n^q)$:
\begin{enumerate}
\setcounter{enumi}{6}
\item $a^p=a^q\cap n^p$, $T^p\subset T^q$, and $H^p\subset H^q$,
\item $a^q\setminus a^p\subset A_{\alpha^p}$,
\item $f_{\alpha^p}(i)\neq i$ implies $f^q(i)=f_{\alpha^p}(i)$,
\item $f^q([n,m))=[n,m)$ for $n<m$ both in $a^q\setminus a^p$,
\item $f^q(x_t\cap [n^p,n^q)) = x_t\cap [n^p,n^q)$,
\item if $t\in D^p$ and $i\in x_t\cap \fix(f^q)$, then $i\in Y$
\end{enumerate}
\end{definition}

It should be clear that the involution $f$ introduced by $Q(\mathfrak
T,Y)$ satisfies that for each $t\in D_Y$, $\fix(f)\cap x_t \subset^*
Y$, and, with the help of the following density argument, that
 $\mathfrak T\cup \{(\gamma,A,f)\}$ is again a tower of
 $T$-involutions where $A$ is the infinite set introduced
 by the first
 coordinates of the conditions in the generic filter.

\begin{lemma} If\label{density}
 $W\subset \gamma$, $Y\subset \Naturals$, and 
$\mathfrak T=
\{ (A_\xi, f_\xi) : \xi\in W\}$ is a tower of $T$-involutions 
and $p\in Q(\mathfrak T,Y)$, then for any $\tilde T\in [T]^{<\omega}$,
 $\zeta\in W$, 
and any $m\in \Naturals$, 
there is a $q<p$ such that $n^q\geq m$, $\zeta\in H^q$,
$T^q\supset \tilde T$,
and $\fix(f^q)\cap (x_t\setminus n^p)$ is not empty for each $t\in
T^p$.
\end{lemma}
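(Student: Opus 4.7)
The plan is to construct a stronger condition $q<p$ by enlarging the finite indices of $p$ to absorb $\zeta$ and $\tilde T$, pushing $n^q$ past $m$, and producing the required fixed points of $f^q$ inside each $x_t$ for $t\in T^p$. First set $H^q = H^p\cup\{\zeta\}$, $T^q = T^p\cup \tilde T$, and $\alpha^q=\max H^q$. Using the mod-finite clauses in the definition of a tower of $T$-involutions (and the finiteness of $H^q$ and $T^q$), fix a threshold $N>\max(n^p,m,\max\fin(T^q))$ so that for $i\geq N$: (a) whenever $\xi\in H^q$ and $f_\xi(i)\neq i$, then $f_\xi(i)=f_{\alpha^q}(i)$; (b) $A_{\alpha^q}\cap [N,\infty)\subset \bigcap_{\xi\in H^q} A_\xi$; (c) $f_{\alpha^q}[x_t\setminus N]=x_t\setminus N$ for every $t\in T^q$.

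Next, for each $t\in T^p$ choose a distinct witness $i_t\in x_t\cap\fix(f_{\alpha^q})$ with $i_t>N$, distinct from those already picked. The one nontrivial requirement is to respect clause (12) in Definition \ref{tower}: when $t\in D^p = D_Y$, we must ensure $i_t\in Y$, and this is possible because $D_Y=E_Y^\perp$ precisely guarantees that no finite intersection of $\fix(f_\xi)$-sets makes $x_t\cap Y$ finite when $t\in D_Y$, so $x_t\cap Y\cap \fix(f_{\alpha^q})$ is infinite. For $t\notin D_Y$ no $Y$-constraint binds. Now select $n^q\in A_{\alpha^q}$ with $n^q$ strictly larger than every $i_t$; by (b) we also have $n^q\in A_{\alpha^p}$, so tower clause (4) makes $[n^p,n^q)$ closed under $f_{\alpha^p}$. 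Set $a^q=a^p\cup\{n^q\}$, so that $n^q=\max a^q\in A_{\alpha^q}$ and $a^q\setminus a^p\subset A_{\alpha^p}$, meeting clauses (1) and (8).

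It remains to define $f^q$ on $[n^p,n^q)$. On $[n^p,n^q)\setminus\fix(f_{\alpha^p})$ clause (9) forces $f^q=f_{\alpha^p}$, and this is a genuine involution on that set by the preceding paragraph. On the free part $\fix(f_{\alpha^p})\cap [n^p,n^q)$, declare each $i_t$ fixed and pair off all other elements within each atom of the finite Boolean algebra generated by $\{x_t\cap [n^p,n^q):t\in T^p\}$: pairings kept inside an atom automatically respect clause (11), while any point we leave fixed inside $x_t$ for some $t\in T^p\cap D^p$ must belong to $Y$ in order to honor clause (12). The main obstacle is a possible parity obstruction inside a single atom whose type involves a $D^p$-member: its non-$Y$ points must all be paired and may come in odd total. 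This is handled by exploiting our freedom in choosing $n^q\in A_{\alpha^q}$: since $A_{\alpha^q}$ is infinite and by (c) and the infinitude of each $\fix(f_{\alpha^q})\cap x_t$, we can push $n^q$ a bit further along $A_{\alpha^q}$ to shift cell parities until every atom admits a valid pairing, while keeping all the chosen $i_t$ inside $[n^p,n^q)$.

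Once $f^q$ is so defined, conditions (1)--(6) of Definition \ref{tower} for $q$ and (7)--(12) for $q<p$ are routine to verify, and the fixed points $i_t\in \fix(f^q)\cap (x_t\setminus n^p)$ provide the witnesses required by the lemma. The hardest step is the parity adjustment in the final paragraph; everything else is bookkeeping enabled by the tower axioms.
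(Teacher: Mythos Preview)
Your overall architecture matches the paper's: enlarge $H^p$ and $T^p$, pick $n^q$ large and in $A_{\alpha^q}$, set $f^q=f_{\alpha^p}$ on $[n^p,n^q)\setminus\fix(f_{\alpha^p})$, and build an involution $\bar f$ on $S=[n^p,n^q)\cap\fix(f_{\alpha^p})$ atom-by-atom with respect to the partition $\{[i]_{T^p}\cap S\}$. The bookkeeping you do with the tower axioms is fine, and your observation that $t\in D_Y$ forces $x_t\cap Y\cap\fix(f_{\alpha^q})$ to be infinite is correct.

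The gap is your parity step. You correctly identify the obstruction: in any atom $A=[i]_{T^p}\cap S$ that lies inside some $x_t$ with $t\in D_Y\cap T^p$, all fixed points of $\bar f$ must lie in $Y$, so an involution on $A$ exists iff $|A\setminus Y|$ is even (equivalently, $|A\cap Y|$ has the same parity as $|A|$ and is at least that parity). But your proposed cure---``push $n^q$ a bit further along $A_{\alpha^q}$ to shift cell parities''---does not work. There are finitely many constrained atoms, each with its own parity bit, and as $n^q$ moves through $A_{\alpha^q}$ these bits flip in a pattern you do not control: between consecutive elements of $A_{\alpha^q}$ an arbitrary block of integers enters, landing in various atoms simultaneously. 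There is no reason the vector of parities ever hits the all-zero configuration, so this argument is incomplete.

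The paper sidesteps the parity problem rather than solving it. Instead of trying to make $|A\setminus Y|$ even, it chooses $n^q$ large enough that every constrained atom $[i]_{T^p}$ contains at least two points of $Y$ in $[n^p,n^q)$ (this is condition~(5) in the paper's proof). Then one simply defines $\bar f$ on each $[i]_{T^p}\cap S$ by: if the cell has even size, fix two $Y$-points and pair off the remaining even-many; if odd, fix one $Y$-point and pair off the rest. Either way every fixed point is in $Y$ and every cell (hence every $x_t$) gets a fixed point, so both clause~(12) and the lemma's conclusion are satisfied with no parity bookkeeping. Replacing your last paragraph with this device completes your argument.
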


\begin{proof}
Let $\beta$ denote the maximum $\alpha^p$ and $\zeta$ and
 let $\eta$ denote the minimum.
Choose any $n^q\in A_{\alpha^q}\setminus m$ large enough so that
\begin{enumerate}
\item 
$f_{\alpha^p}[x_t\setminus n^q] = x_t\setminus n^q$ for $t\in \tilde
T$,
\item $f_\eta\restriction (\Naturals\setminus (n^q\cup \fix(f_\eta)))
            \subset f_{\beta}$,
\item $A_\beta\setminus A_\eta$ is contained in $n^q$,
\item $n^q\cap  [i]_{T^p}\cap  \fix(f_{\alpha^p})$
 is non-empty for each $i\in \Naturals$
such that $[i]_{T^p}$ is in the finite set
 $\{ [i]_{T^p} : i\in \Naturals\}\setminus \fin(T^p)$,
\item if $i\in x_t \cap n^q\setminus n^p$ for some $t\in D_Y\cap T^p$, 
then $Y$ meets $[i]_{T^p} \cap n^q\setminus n^p$ in at least two points.
\end{enumerate}
Naturally we also set $H^q=H^p\cup\{\zeta\}$ and
$T^q = T^p\cup\tilde T$. The choice of  $n^q$
is large enough to satisfy (3), (4), (5) and (6) of Definition \ref{tower}.
We will set $a^q = a^p\cup \{n^q\}$ ensuring (1) of Definition
\ref{tower}. Therefore for any $f^q\supset f^p$ which is an involution
on $n^q$, we will have that $q=(a^q,f^q, T^q, H^q)$ is in the
poset. We have to choose $f^q$ more carefully to ensure that $q\leq
p$. Let $S=[n^p,n^q)\cap \fix(f_{\alpha^p})$, and
$S'=[n^p,n^q)\setminus S$.
We choose $\bar f$ an involution on $S$ and set $f^q =
f^p\cup (f_{\alpha^p}\restriction S') \cup 
\bar f$. We leave it to the reader to check that it suffices
to ensure that 
 $\bar f$ sends $[i]_{T^p}\cap S $ to itself for
 each $t\in T^p$ and that $\fix(\bar f)\cap x_t\subset Y$ for 
each $t\in T^p\cap D_Y$. Since the members of
$\{  [i]_{T^p}\cap S : i\in \Naturals\}$ are pairwise disjoint we can 
define $\bar f$ on each separately.

 For each $[i]_{T^p}\cap S$ which has
even cardinality, choose two points $y_i,z_i$ from it
 so that 
if there is a
$p\in D_Y\cap T^p$ such that $[i]_{T^p}\subset x_t$, 
then $\{y_i,z_i\}\subset Y$.
Let $\bar f$ be any  involution on
$[i]_{T^p}\cap S$ so that $y_i,z_i$ are the only fixed points. 
 If $[i]_{T^p}\cap S$ has odd cardinality then choose
a point $y_i$ from it so that if
$[i]^{T^p}$   is contained in
$x_t$ for some $t\in D_y\cap T^p$, then  $y_i\in 
Y\cap [i]_{T^p}\cap S$. Set $\bar f(y_i)=y_i$ and choose 
$\bar f$  to be any fixed-point free involution on 
 $[i]^{T^p}\cap S\setminus \{y_i\}$. 
\end{proof}

Let $P_\theta$ now be the finite support iteration defined as in 
Definition \ref{poset} except for two important changes.
For $\gamma\in C$,   we replace 
 $T$-splitting towers by the obvious  inductive definition of
towers of $T$-involutions when we replace
the posets 
$\dot Q(\mathcal S_{C\cap \gamma},\dot Y_\gamma)$ by
$\dot Q(\mathfrak T_{C\cap \gamma},\dot Y_\gamma)$. For $\gamma\notin
C$ we require that $\forces{P_\gamma} {\dot Q_\gamma\ \hbox{ is}\ 
\sigma\hbox{-linked.}}$

Special (parity) properties of the family $\{x_t : t\in T\}$ are
needed to ensure that $\forces{P_\gamma}{\dot Q(\mathcal
  S_{C\cap\gamma}, \dot Y_\gamma) \hbox{\ is ccc\ }}$ even for cases
when $\cf(\gamma)$ is not $\omega_1$. 

The proof of Theorem \ref{main3} is virtually the same as the proof of
Theorem \ref{main2} (so we skip) once we have established that the
iteration is ccc.

\begin{lemma} For\label{main3ccc} each $\gamma\in C$,
$P_{\gamma+1}$ is ccc.
\end{lemma}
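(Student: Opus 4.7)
The plan is to proceed by induction on $\gamma \in C$, adapting the $\Delta$-system framework of Lemma~\ref{main2ccc} to the $T$-involution setting. First I would define the analogous dense subset $P_{\gamma+1}^* \subset P_{\gamma+1}$ of \emph{fully normalized} conditions: $p \in P_{\gamma+1}^*$ if and only if there is a single witnessing integer $n$ such that for every $\beta \in \dom(p) \cap C$, the condition $p\restriction\beta$ forces that $p(\beta)$ equals a concretely specified tuple $(a^\beta, f^\beta, T^\beta, H^\beta)$ with $n^{p(\beta)} = n$ and $H^\beta = \dom(p)\cap C\cap\beta$. Density of $P_{\gamma+1}^*$ in $P_{\gamma+1}$ follows by induction, using Lemma~\ref{density} to raise the witnessing integer one $C$-coordinate at a time while extending each involution on $[n_0,n)\cap \fix(f_{\alpha^p})$ in accordance with the tree structure. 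Given an uncountable family $\{p_\alpha : \alpha\in\omega_1\}\subset P_{\gamma+1}^*$, a standard $\Delta$-system refinement then lets me assume that $\{\dom(p_\alpha)\}$ forms a $\Delta$-system of root $R$, that the tuple $(n^*,a^*,f^*,T^*)$ specified at coordinate $\gamma$ is constant across $\alpha$, and that the trees $T^{p_\alpha(\beta)}$ form $\Delta$-systems at each $\beta\in R\cap C$.

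I would then split on the cofinality of $C\cap\gamma$. When $\cf(C\cap\gamma)=\omega_1$, I would mimic Lemma~\ref{main2ccc} verbatim: pick $\delta<\gamma$ with $R\subset\delta$ and with the tails $\dom(p_\alpha)\setminus\delta$ mutually disjoint above $\delta$; invoke the inductive ccc of $P_\delta$ to choose $\alpha<\beta$ with $p_\alpha\restriction\delta$ and $p_\beta\restriction\delta$ compatible; and build $q\in P_{\gamma+1}$ by taking a common extension below $\delta$, copying $p_\alpha$ on $\gamma\cap\dom(p_\alpha)\setminus\delta$, copying $p_\beta$ on $\dom(p_\beta)\setminus C$ above $\delta$, and amplifying $H^{p_\beta(\xi)}$ to $H^{p_\beta(\xi)}\cup H^{p_\alpha(\gamma)}$ at each $\xi\in\dom(p_\beta)\cap C\cap[\delta,\gamma]$. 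The verification that $q\in P_{\gamma+1}$ proceeds by induction on $\eta\in C\cap\gamma$, using the normalization to force the symmetric differences $A_\xi \mathbin{\triangle} A_\eta$ and $(f_\xi \mathbin{\triangle} f_\eta)\restriction(\Naturals\setminus\fix)$ for supported $\xi\in C\cap\eta$ to be absorbed into $n^*$.

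When $\cf(C\cap\gamma)\neq\omega_1$, the conditions $p_\alpha$ and $p_\beta$ already carry identical finite data at $\gamma$ and differ only in their $H$-components. Here I would exploit the parity features enforced on $\{x_t : t\in T\}$ by the construction — namely that for each $t$ and $T^*$ the equivalence classes $[i]_{T^*}\cap x_t$ admit even splittings past some controlled finite threshold — to extend the common involution $f^*$ onto any new interval satisfying clause~(11) of Definition~\ref{tower} and the fixed-point restriction clause~(12) on the amalgamated $D_Y$-set. The main obstacle, present in both cases but decisive here, is the coherence clause~(5) of Definition~\ref{tower}: one must arrange that the amalgamated tuple at coordinate $\gamma$ satisfies $f_\xi\restriction(\Naturals\setminus(\fix(f_\xi)\cup n^*))\subset f_{\alpha^q}$ simultaneously for every $\xi$ in the combined $H$-component coming from both $p_\alpha$ and $p_\beta$. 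This is precisely what the $P_{\gamma+1}^*$ normalization is engineered to guarantee, but propagating it through the amalgamation is the delicate step that did not arise in Lemma~\ref{main2ccc}, and is the point at which the parity hypothesis on $\{x_t : t\in T\}$ becomes essential.
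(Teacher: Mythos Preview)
Your proposal has a genuine gap: you have overlooked the single hypothesis that distinguishes the iteration in Theorem~\ref{main3} from that in Theorem~\ref{main2}, namely that for $\xi\notin C$ one now requires $\forces{P_\xi}{\dot Q_\xi\ \hbox{is}\ \sigma\hbox{-linked}}$ rather than merely ccc. The paper's proof exploits this directly: the normalization $P_\alpha^*$ additionally demands, for each $\xi\in\dom(p)\setminus C$, an integer $k_\xi$ with $p\restriction\xi\forces{P_\xi}{p(\xi)\in\dot Q(\xi,k_\xi)}$. After the $\Delta$-system refinement one then also fixes $k_\xi$ on the root $R$, and compatibility at every $\xi\in R\setminus C$ is automatic via a name $\dot r_\xi$ for a function choosing common extensions inside each linked piece. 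With this in hand the paper shows that \emph{any} two conditions $p_\alpha,p_\beta$ from the refined family are compatible, uniformly in $\gamma$, with no cofinality split and no appeal to an inductive ccc of some $P_\delta$.

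Your case split cannot be completed as stated. In the $\cf(C\cap\gamma)=\omega_1$ case, mimicking Lemma~\ref{main2ccc} requires the inductive ccc of $P_\delta$, but your induction is only over $\gamma\in C$, and nothing in your outline handles $P_\delta$ for general $\delta$. More seriously, in the $\cf(C\cap\gamma)\neq\omega_1$ case your assertion that ``the conditions already carry identical finite data at $\gamma$ and differ only in their $H$-components'' ignores every coordinate in $R\setminus\{\gamma\}$; you offer no mechanism whatsoever for compatibility at $\xi\in R\setminus C$, and without the $\sigma$-linked hypothesis there is none. The parity features you invoke are relevant only to the density of $P^*$ (through Lemma~\ref{density}, where one must produce involutions on sets $[i]_{T^p}\cap S$ with controlled fixed points), not to the amalgamation step. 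Finally, the paper's normalization is sharper than yours in one further respect you will need: it requires $T^\beta\subset T^{p(0)}$ for all $\beta\in\dom(p)\cap C$, so that $q(0)$ forces $\fin(T^\alpha\cup T^\beta)\subset n$ and condition~(4) of Definition~\ref{tower} holds in the amalgam.
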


\begin{proof}
We again define $P_\alpha^*$ to be those $p\in P_\alpha$ for which
there is an $n\in \Naturals$ such that for each $\beta\in \dom(p)\cap
C$, there are $n\in a^\beta\subset n{+}1$, $f^\beta \in n^n$,
$T^\beta\in 
[T]^{<\omega}$, and $H^\beta = \dom(p)\cap C\cap \beta$ 
such that 
$p\restriction \beta\forces{P_\beta} {p(\beta)=(a^\beta,f^\beta,
  T^\beta, H^\beta)}$. However, in this proof we must also make some
special assumptions in coordinates other than those in $C$. For each
$\xi\in \gamma\setminus C$, we fix a collection $\{ \dot Q(\xi,n) :
n\in \omega\}$ of $P_\xi$-names so that 
$$
1\forces{P_\xi}{ \dot Q_\xi = \bigcup_n \dot Q(\xi,n)
\ \hbox{and} \
(\forall n)~~\dot Q(\xi,n) \
\hbox{is linked.}}
$$
The final restriction on $p\in P_\alpha^*$ is that for each $\xi\in
\alpha\setminus C$, there is a $k_\xi\in \omega$ 
such that $p\restriction \xi
\forces{P_\xi}{p(\xi)\in \dot Q(\xi,k_\xi)}$.

Just as in Lemma \ref{main2ccc}, Lemma \ref{density} can be used to
show by induction that $P_\alpha^*$ is a dense subset of $P_\alpha$. 
This time though, we also demand that $\dom(f^{p(0)}) = n\times
T^{p(0)}$  
is such that $T^\beta\subset T^{p(0)}$ for all $\beta \in \dom(p)\cap
C$ and some extra argument is needed because of needing to decide
values in the name $\dot Y_\gamma$ as in the proof of Lemma
\ref{density}. Let $p\in P_{\beta+1}$ and assume that $P_{\beta}^*$ is
dense in $P_\beta$. By density, we 
may assume that $p\restriction \beta\in
P_{\beta}^*$, $H^{p(\beta)}\subset \dom(p)$, $T^{p(\beta)}\subset
T^{p(0)}$, 
 and that 
$p\restriction \beta$ has decided the members of the set
$D_{\dot Y_\beta}\cap T^{p(\beta)}$.  We can assume further
 that for each $t\in 
D_{\dot Y_\beta}\cap T^{p(\beta)}$, $p\restriction\beta$ has forced a
value $ y_t\in \dot Y_\beta \cap x_t\setminus \bigcup\{ x_s : s\in
T^p\ \mbox{and}\ s\not\leq t \}$ such that $y_t > n^{p(\beta)}$. 
We are using that 
 $T$ is not finitely  branching to deduce that if
$t\in D_{\dot Y_\beta} $, then $p\restriction \beta\forces{P_\beta}{
\dot Y_\beta \cap x_t\setminus \bigcup\{ x_s : s\in
T^p\ \mbox{and}\ s\not\leq t \}\ \hbox{ is non-empty} }$
(which follows since 
$\dot Y_\beta$ must meet $x_s$ for each immediate successor $s$ of
$t$). Choose any $m$ larger than $y_t$ for each $t\in
T^{p(\beta)}$. Without loss of generality, we may assume that the
integer $n^*$ witnessing that $p\restriction \beta\in P_\beta^*$ is
at least as large as $m$ and that $n^*\in \bigcap_{\xi\in
  H^{p(\beta)}} A_{\xi}$. Construct $\bar f$ just as in Lemma
\ref{density}, except that this time there is no requirement to
actually have fixed points so one member of $\dot Y_\beta$ in each 
appropriate $[i]_{T^{p(\beta)}}$ is all that is required. Let
$\zeta=\max(\dom(p)\cap \beta)$. 
No new
forcing decisions are required of $p\restriction\beta$ in order to
construct a suitable $\bar f$, hence this shows that $p\restriction
\beta \cup \{ (\beta,q)\}$ (where $q$ is constructed below $p(\beta)$
as in Corollary \ref{density} in which $H^{p(\zeta)}\cup\{\zeta\}$ is
add to $H^q$) is the desired extension of $p$ which
is a member of $P_{\beta+1}^*$. 

Now to show that $P_{\gamma+1}$ is ccc, let $\{ p_\alpha : \alpha \in
\omega_1\}\subset P_{\gamma+1}^*$. Clearly we may assume that the
family  $\{ p_{\alpha}(0) : \alpha \in \omega_1\}$ are pairwise
compatible and that there is a single integer $n$ such that, for each
$\alpha\in \omega_1$, 
$\dom(p_{\alpha}(0)) = n\times T^\alpha$ for some $T^\alpha\in
[T]^{<\omega}$. 
Also,
we may assume that there is some
 $(a,h)$ such that, for each $\alpha$, 
$$ p_\alpha \restriction\gamma\forces{P_\gamma}{p(\gamma) =
(a,h,T^\alpha,H^\alpha)}$$
where $H^\alpha = \dom(p_\alpha)\cap C\cap \gamma$. 

The family $\{
\dom(p_\alpha) \cap \gamma : \alpha\in \omega_1\}$ may be assumed
to form a
$\Delta$-system with root $R$. For each $\xi\in R$, we may
assume that, if $\xi\notin C$, there is a single $k_\xi\in \omega$
such that, for all $\alpha$, $p\sb\alpha\restriction\xi
\forces{P_\xi}{p_\alpha(\xi) \in \dot Q(\xi,k_\xi)}$, and if $\xi\in
C$,
 then there is a single $(a_\xi,h_\xi)$ such that
 $p\sb\alpha\restriction \xi\forces{P_\xi}{
p_\alpha(\xi) = (a_\xi,h_\xi, T^\alpha, H^\alpha \cap \xi)}$.
For convenience, for each  $\xi\notin C$
let $\dot r_\xi$ be a $P_\xi$-name of a function from $\omega\times 
\dot Q_\xi^2$ such that, for each $k\in \omega$,
$$1\forces{P_\xi}{ \dot r_\xi(k,q,q')\leq q,q' ~~(\forall q,q'\in \dot
  Q(\xi,k)) }. $$

Fix any $\alpha<\beta<\omega_1$ and let $H=H^\alpha\cup H^\beta$. 
Recall that $p_\alpha(0)$ and $p_\beta(0)$ are compatible. 
Recursively define a $P_\xi$-name
$q(\xi)$ for $\xi\in \dom(p_\alpha)\cup\dom(p_\beta)$ so that 
$q\restriction\xi\Vdash_{P_\xi}{}$
$$
\hbox{``}
q(\xi) = \begin{cases}
 (n,T^\alpha\cup T^\beta,f^{p_\alpha(0)}\cup f^{p_\beta(0)}) & \xi=0\\ 
 \dot r_\xi(k_\xi,p_\alpha(\xi),p_\beta(\xi))  & \xi\in R\setminus C\\
 p_\alpha(\xi) & \xi\in \dom(p_\alpha)\setminus (R\cup C)\\
 p_\beta (\xi) & \xi\in \dom(p_\beta )\setminus (R\cup C)\\
 (a_\xi,h_\xi, T^\alpha\cup T^\beta, H\cap \xi) & \xi\in C.
\end{cases}
\hbox{''.}
$$
Now we check that $q\in P_{\xi}$ by induction on
$\xi\in \gamma+1$. 

The first thing to note is that not only is this
true for $\xi=1$, but also that $q(0)\forces{Q_0} {\fin(T^\alpha\cup
T^\beta)\subset n}$. Since $p_\alpha$ and $p_\beta$ are each in
$P_{\gamma+1}^*$, this show that condition (4) of Definition
\ref{tower} will hold in all coordinates in $C$.

We also prove, by induction on $\xi$, that $q\restriction \xi$ forces
 that for $\eta< \delta  $ both in $H\cap \xi$ 
and $t\in T^\alpha\cup T^\beta$,
$f_\delta [x_t\setminus n] = x_t\setminus n$, 
 $f_\eta\restriction
(\Naturals\setminus (\fix(f_\eta)\cup n))\subset f_\delta $
 and $A_\delta \setminus n\subset A_\eta$. 

Given $\xi\in H$ and the assumption that 
$q\restriction\xi\in P_\xi$,
and $\alpha=\alpha^{q(\xi)} = \max(H\cap \xi)$, condition (3), (5),
and (6) 
of Definition \ref{tower} hold by the inductive hypothesis above. 
It follows then that $q\restriction \xi\forces{P_\xi} {q(\xi)\in \dot
Q_\xi}$. 
By the definition of the ordering on $\dot Q_\xi$, given that $H\cap
\xi = H^{q(\xi)}$ and $T^\alpha\cup T^\beta = T^{q(\xi)}$, it follows
that the inductive hypothesis then holds for $\xi+1$. 
 
It is trivial for $\xi\in \dom(q)\setminus C$, that 
$q\restriction \xi\in P_\xi$ 
implies that $q\restriction\xi\forces{P_\xi}{q(\xi)\in \dot Q_\xi}$. 
This completes the proof that $q\in P_{\gamma+1}$, and it is trivial
that $q$ is below each of $p_\alpha$ and $p_\beta$.
\end{proof}

\begin{remark} If we add a trivial tree $T_1$ to the collection 
$\{ T_\lambda : \lambda\in \Lambda\}$ (i.e. $T_1$ has only a root),
then the root of $T$ has a single extension which is a maximal node
$t$, 
and with no change to the proof of Theorem \ref{main3}, one obtains
that $F$ induces an automorphism on $x_t^*$ with a single fixed point.
Therefore, it is consistent (and likely as constructed) that $\betan$
will have symmetric tie-points of type $(\mathfrak c,\mathfrak c)$ in
the model $V[G\cap P]$ and $V[G]$. 
\end{remark}

\begin{remark} In the proof of Theorem \ref{main2}, it is easy to
  arrange that each $K_\lambda$ ($\lambda\in \Lambda$) is also
  $K_{\mathfrak T_\lambda}$ for a ($T_\lambda$-generic) full tower, 
$\mathfrak T_\lambda$, of
 ${\Naturals}$-involutions. However the generic sets added by the
 forcing $P$ will 
  prevent this tower of involutions from extending to a full
  involution.
\end{remark}

\printthequestions

\end{document}